\newcommand{\N}{{\mathbb N}}
\newcommand{\Z}{{\mathbb Z}}
\newcommand{\R}{{\mathbb R}}
\newcommand{\C}{{\mathbb C}}
\newcommand{\EE}{B} 
\newcommand{\be}{\begin{equation}}
\newcommand{\ee}{\end{equation}}
\newcommand{\ben}{\begin{enumerate}}
\newcommand{\een}{\end{enumerate}}
\newcommand{\bit}{\begin{itemize}}
\newcommand{\eit}{\end{itemize}}
\newcommand{\edoc}{\end{document}}
\begin{document}

\title*{On the completeness of trajectories for some mechanical systems}
\titlerunning{Completeness of trajectories}
\author{Miguel S\'anchez}
\institute{Miguel S\'anchez  \at Departamento de Geometr\'{\i}a y
Topolog\'{\i}a, Universidad de Granada. Facultad de Ciencias,
Campus Fuentenueva s/n. E18071 Granada
(Spain), \email{sanchezm@ugr.es}. \\
Partially supported by Spanish grants with Feder funds P09-FQM-4496 (J. Andaluc\'{\i}a) and MTM2010--18099 (Mineco). 
}
%
%
\maketitle

\abstract{ The classical tools which ensure the completeness of
both, vector fields and second order differential equations for
mechanical systems, are revisited. Possible extensions  in three directions are
discussed: infinite dimensional Banach (and
Hilbert) manifolds, Finsler metrics and pseudo-Riemannian spaces,
the latter including links with some relativistic spacetimes. Special
emphasis is taken in the cleaning up of known techniques, the
statement of open questions and the exploration of prospective
frameworks. }



\section{Introduction}

As explained in the classical Abraham \& Marsden book \cite[p.
71]{AM}, the completeness of vector fields {\em is often stressed
in the literature since it corresponds to well-defined dynamics
persisting eternally}. However, in many circumstances one has to
live with incompleteness and, in this case, incompleteness may
mean  the failure of our model. Remarkably, this happens in
General Relativity, where singularities have become so common
(Schwarzschild spacetime, Raychauduri equation, theorems by
Penrose and Hawking...) that one expects to find incompleteness
under  physically reasonable general assumptions
---and one hopes that the quantum viewpoint will be able to
explain the physical meaning of singularities. In any case, the
possible completeness or incompleteness  becomes a fundamental
property of the model.

In his early works at the beginning of the seventies, Marsden
gave  two remarkable results on completeness. The first, in
collaboration with Weinstein \cite{WM},  extends  previous works
on the completeness of Hamiltonian vector fields by Gordon
\cite{Go},  Ebin \cite{Eb} and others. The second, about the
geodesic completeness of compact homogeneous pseudo-Riemannian
manifolds \cite{MaHomog}, was one of the few results ensuring
completeness instead of incompleteness in the Lorentzian setting
of that time. The results on the side of geodesics in the
Lorentzian setting have increased notably since then (see the
review \cite{CS}). Moreover, some connections with the original
Riemannian results for Hamiltonian systems have appeared. This has
been a stimulus for the recent update and extension of such
Riemannian results carried out by the author and his coworkers in
\cite{CRS}.

The aim of this paper is to revisit these results,
formulating them in a general framework, and pointing out new open
questions as well as new lines of study. The paper is organized into
three parts. In the first  (Section \ref{s1}), some
preliminaries on infinite-dimensional Banach manifolds endowed
with Finsler metrics are introduced. From our viewpoint, this is
the  natural framework for the completeness of first order
systems (vector fields),  and some second order ones can be
reduced to this setting.

In Section \ref{s2} we study completeness for both, first and
second order systems. For first order, we  review some old results
\cite{Go, Eb, WM, AM, AMR}  formulating them in the general Banach
Finsler case, and also allowing  the time-dependence of the vector
fields. We introduce {\em primary bounds}
  (Definition \ref{dprimarycomplete}) here, which allow the purification of
previous techniques (Theorem \ref{tsublinear}). For second order,
i.e., trajectories accelerated by
 potentials and other time-dependent forces, we give a general
result on completeness in Riemannian Hilbert manifolds (Theorem
\ref{t1}), which summarizes and extends those in \cite{Go, Eb, WM,
CRS}. The latter are also simplified technically because, even
though our proof uses  comparison criteria between differential
equations as in previous references,  here such criteria are
reduced essentially to the elementary lemma~\ref{lsubsol}
---and the bounds through  {\em positively complete} functions introduced
in \cite{WM} reduce to primary bounds as well. We suggest the
possibility of going further in two directions: the time-dependence
of the potentials and the Finsler Banach framework.

Section \ref{s4} deals with (finite-dimensional) pseudo-Riemannian
manifolds. Here there is a great diversity of results and techniques
(see \cite{CS}), and we focus on two topics. Firstly, results regarding
manifolds with a high degree of symmetry. In particular, the
extension of Marsden's Theorem \ref{tMa} to conformally related
metrics (Theorem \ref{tRS}), is explained by using the techniques
in the previous section. Secondly, the geometry of wave type
spacetimes. This provides a simple link between Riemannian and
Lorentzian results (Theorem \ref{twaves}) with new exciting open
questions
---some of them collected together at the end.

\section{Preliminaries on infinite-dimensional manifolds}
\label{s1}

Some preliminaries on  Banach  manifolds are introduced here.
Results on the elements which will be relevant for the posterior
results will be gathered together, and a framework for tentative
generalizations will be provided. Special emphasis is focused on
the role of paracompactness for the ambient manifold, as this
condition will be equivalent to the existence of a $C^0$-Finsler
metric such that its associated distance metrizes the manifold
topology. The role of smoothability for Finsler metrics is also
emphasized. Essentially, $C^0$ smoothability is sufficient for
distance estimates in first order problems (Section \ref{s2.1}),
but further smoothability may be required for the development of
second order ones (Section \ref{s2.3}).

 We will follow conventions
on Banach and Hilbert manifolds as in the original papers by
Palais \cite{PalaisProcSym_CriticalPointTh68, PalaisTop_LustSch66,
PalaisTop_Hilbert63}, as well as books such as Abraham, Marsden \&
Ratiu \cite{AMR}, Lang \cite{Lang}, Deimling \cite{De}, Kriegl \&
Michor \cite{Michor} or Moore's notes \cite{Moore}.

\runinhead{Topological conventions on Banach manifolds.} Any
Banach manifold $M$ will be always assumed $C^k$ with $k\geq 1$,
as well as {\em connected, Hausdorff and paracompact} and, thus,
normal\footnote{In particular, our Banach manifolds will be always
regular and, so, some difficulties pointed out by Palais in
\cite{PalaisProcSym_CriticalPointTh68} (see Sect. 2 including the
Appendix therein), will not apply. The central role of
paracompactness from the topological viewpoint is stressed in
Figure \ref{fig1}. Notice that, as a difference with the finite
dimensional case, second countability does not imply
paracompactness (see for example \cite{MargalefOuterelo},
\cite[Sect. 27.6]{Michor} or
\cite{PalaisProcSym_CriticalPointTh68}).}. A {\em $n$-manifold}
will be a finite dimensional Banach manifold with dimension $n\in
\N$.  When the infinite dimension is allowed, we will remark
explicitly that $M$ is Banach (say, modelled on some Banach space
$\EE$ with norm $\parallel \cdot
\parallel$) or, when applicable, that it is Hilbert (modelled on some real Hilbert space $H$ with
inner product $\langle \cdot,\cdot \rangle$). When indefinite
metrics are considered, as in Section \ref{s4},  $M$ will be
typically a $n$-manifold.

\runinhead{Finsler Banach manifolds.} $F$  will denote a
(reversible) Finsler metric on the Banach manifold $M$, and
$(M,F)$ will be called a Finsler Banach manifold. This notion is
taken in the sense of Palais \cite{PalaisTop_LustSch66}, that is,
$F$ yields a norm at each tangent space:
\begin{equation}\label{eFp}
F_p: T_pM \rightarrow \R
\end{equation}
which admits a $C^k$ chart $(U,\varphi), p\in U, \varphi: U
\subset M \rightarrow \EE$ such that the induced norms
\begin{equation}\label{einducednorms}\parallel u
\parallel_q:= F_q(d(\varphi^{-1})_{\varphi(q)}(u)) \quad \quad \forall u\in
\EE,\end{equation} (here $d$ denotes  the  differential or tangent
map) satisfy: (a) they are equivalent to the natural norm
$\parallel \cdot
\parallel$ of $\EE$ (i.e., $\epsilon_q \parallel \cdot
\parallel_q \leq \, \parallel \cdot
\parallel \, \leq \epsilon_q^{-1} \parallel \cdot
\parallel_q$ for some $0<\epsilon_q<1$ and all $q\in U$) , and (b) they vary continuously at
$p$ (i.e., for each $0<\epsilon <1$ there exists a neighborhood
$U_\epsilon \subset U$ of $p$ such that $\epsilon \parallel \cdot
\parallel_q \, \leq  \parallel \cdot
\parallel_p \, \leq \epsilon^{-1} \parallel \cdot
\parallel_q$ for all $q\in U_\epsilon$).

\begin{figure}
 \centering
\includegraphics[scale=0.4]{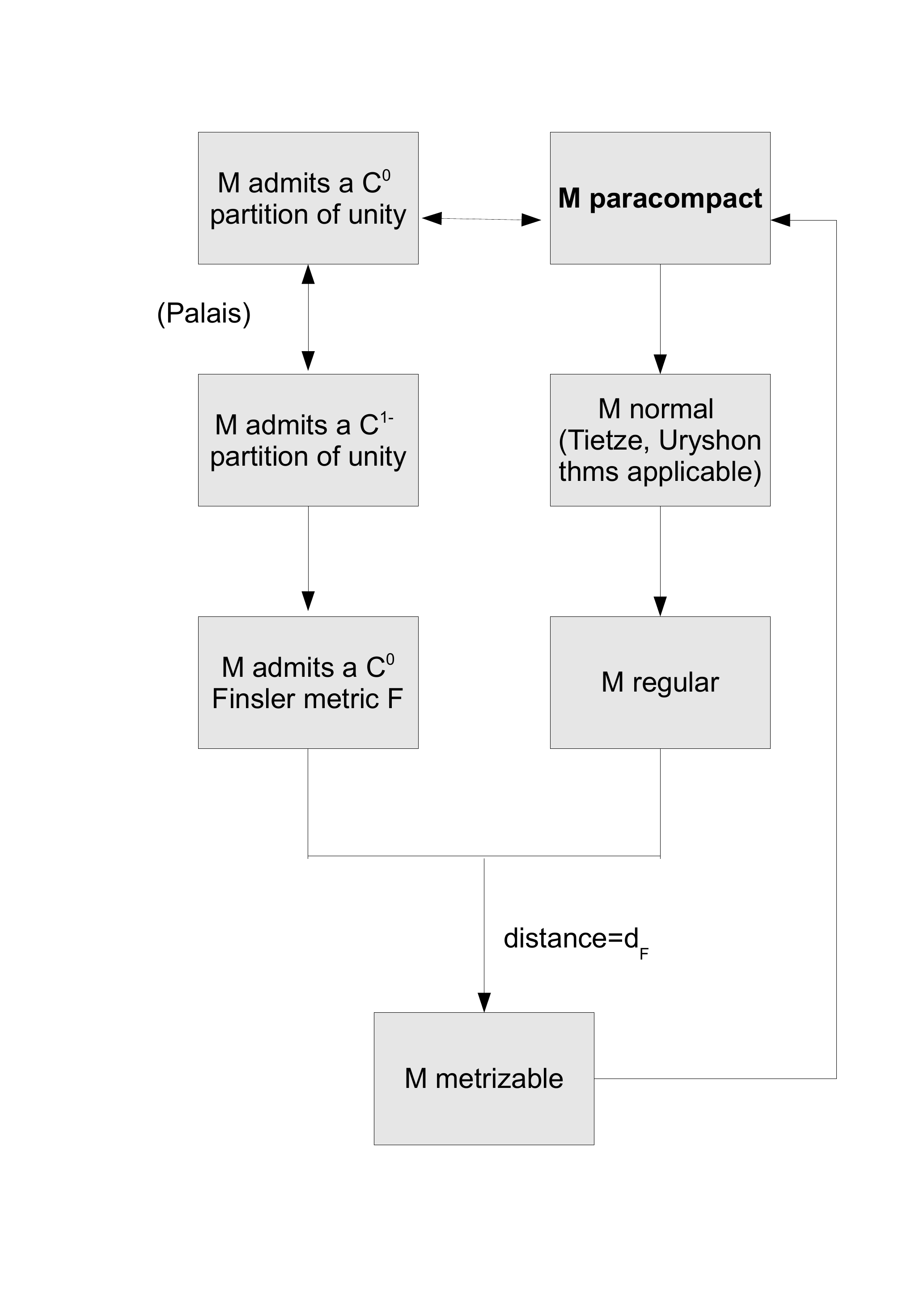}
\caption{Topological properties related to the paracompactness of
a (connected, Hausdorff) Banach manifold.}
  \label{fig1}
\end{figure}

As norms cannot be differentiable at\footnote{By the same reason
that neither is the absolute value function on $\R$. Moreover, at least in the
finite-dimensional case, the square of a norm is smooth at 0 if
and only if the norm comes from a scalar product \cite[Prop. 4.1]{Wa}.}
0,  the $C^{k'}$ differentiability of the norm $\parallel \cdot
\parallel$ means always away from $0$. The Finsler metric is called $C^{k'}$ (for $0\leq k'\leq k-1$)
if  $F_p$ is $C^{k'}$ and varies smoothly with $p$ in a $C^{k'}$
way (i.e., for any  chart $(U,\phi)$ as above the map $U\times
(\EE\setminus\{0\}) \rightarrow \R, (q,u)\mapsto
\parallel u\parallel_q$ is $C^{k'}$).

\runinhead{Existence of Finsler metrics.} The question of the
existence of a $C^0$ Finsler metric depends only on topological
grounds, but the existence of a $C^{k'}$ one with $k'>0$ is much
subtler. Namely, on the one hand the hypothesis of paracompactness
on $M$ becomes equivalent to the existence of $C^0$-partitions of
the unity subordinated to any open covering. By a result of Palais
\cite[Th. 1.6]{PalaisTop_LustSch66}, \cite[Sect.
3]{PalaisProcSym_CriticalPointTh68}, it is also equivalent to the
existence of locally Lipschitz partitions of the unity, and this
allows  ensuring the existence of $C^0$ Finsler metrics in any
Banach manifold \cite[Th. 2.11]{PalaisTop_LustSch66}.
On the other hand, when the model Banach space $\EE$ admits $C^k$
partitions of the unity subordinate to any open covering (which
happens, in particular, when $\EE$ is separable and admits a $C^k$
norm away from 0, see \cite{BF}, \cite[Prop. 5.5.18,
5.5.19]{AMR}), then the Banach manifold $M$ also admits $C^k$
partitions of the unity \cite[Th. 5.5.12]{AMR} and, in this case,
$M$ admits $C^{k-1}$-Finsler metrics too (Figure \ref{fig2}).
\begin{remark}\label{rproducts} It is worth pointing out that, even though the
differentiability of $F$ may be useful for some issues (see
Section \ref{s3.2} below),
it will not be especially relevant for the estimates which involve
length or distances in the first order problems  to be studied in
Section \ref{s2.1}.  This fact is used implicitly in
time-dependent problems. In fact, this case is commonly handled by
transforming it into a non time-dependent one, defined on the product
manifold $M\times \R$ which is endowed with a natural  direct sum of
Finsler metrics (namely, the addition of the Finsler metrics of the
factors), see Remark~\ref{rtimedependentX}. Nevertheless, this
direct sum is non-differentiable away from 0 even if
differentiability is assumed for the metric on each factor (notice
that it is not guaranteed the differentiability  on a vector
tangent to the product whenever one of its two components is equal to
zero).
\end{remark}

\begin{figure}
  \centering
\includegraphics[scale=0.4]{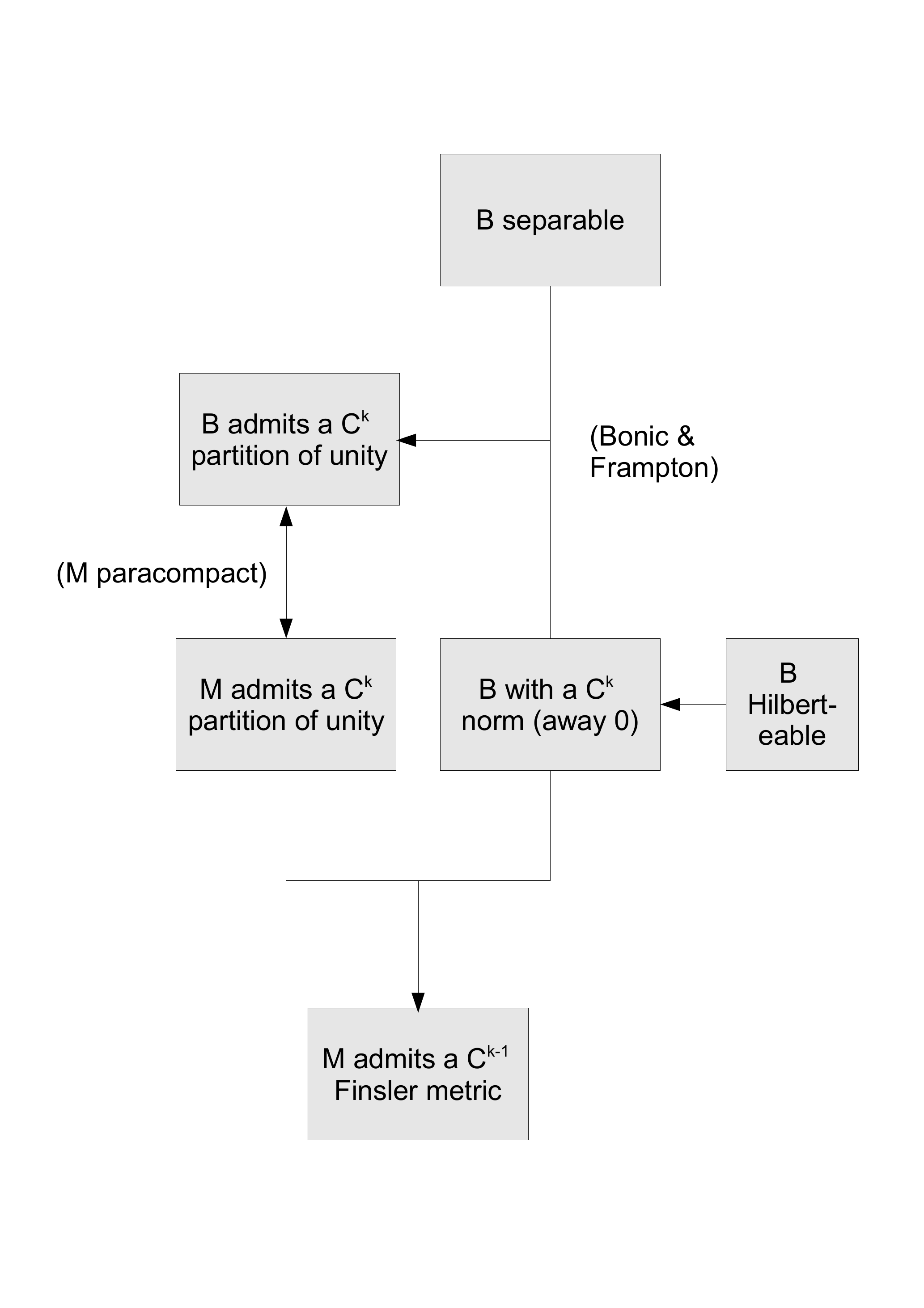}
\caption{Existence of smooth Finsler metrics on a manifold $M$
modelled on the Banach space $B$.}
  \label{fig2}
\end{figure}

\runinhead{Associated distance.} Remember that our definition of a
Finsler metric $F$ includes its {\em reversibility} (i.e.,
$F(v)=F(-v)$ for all tangent vector $v\in TM$). So, $F$
defines a natural distance
 by taking the infimum of the lengths of the curves connecting
each pair of points. This distance will be denoted $d_F$ or,
simply, $d$ if there is no possibility of confusion. One can
prove that the topology generated by $d$ agrees with the manifold
topology by using the regularity of the manifold.
\cite[p. 202]{PalaisProcSym_CriticalPointTh68}
and, so, that all Finsler Banach manifolds are
metrizable\footnote{Consistently, paracompactness can be deduced
from the hypothesis of metrizability (or even just from
pseudo-metrizability, see \cite[Lemma 5.515]{AMR}).}.

We will speak about the completeness of $(M,F)$ in the sense of metric completeness, i.e., the
convergence of  Cauchy sequences for $(M,d)$. One can also
consider geodesics for $(M,F)$ (for example, in the sense of locally length-minimizing
curves of constant speed, with other characterizations under
further smoothability, see Section \ref{s3.2}) and we will say
that $(M,F)$ is {\em geodesically complete} when  its inextensible
geodesics are defined on all $\R$. In the infinite-dimensional case, the completeness of $(M,F)$
implies geodesic  completeness  but, as stressed by Atkin
\cite{At}, neither geodesic completeness implies metric completeness nor other consequences of Hopf-Rinow theorem hold.

In order to make estimates with the distances, we fix a {\em base}
point $p_0\in M$ and denote
\begin{equation}\label{enotationdistancetop0}
|p| = d(p,p_0) \quad \quad \forall p\in M. \end{equation} (This
notation will be used when  the properties under study are
independent of the chosen point $p_0$).

\runinhead{Pseudo-Riemannian metrics on 
Banach
manifolds.}
 When the model space $\EE$ of the
Banach manifold $M$ is reflexive, it is natural to define a
$C^{k'} (k'\leq k-1)$ pseudo-Riemannian metric $g$  as a $C^{k'}$
choice of a continuous symmetric bilinear form $g_p$ at each
tangent space $T_pM$ such that the associated ``flat''
 map (to
lower indexes in finite dimension) into the dual space given by
\begin{equation}\label{ebemol}\flat_p: T_pM\rightarrow T_pM^*, \quad v_p\mapsto g_p(v_p,\cdot
)\end{equation} is a homeomorphism
(if this condition on
$\flat_b$ were not imposed, one would speak of a {\em weak}
pseudo-Riemannian metric, and the reflexivity of $\EE$ would not be
required). The set of all such bilinear forms $g_p$ can be
identified via a chart around $p$ with an open subset of the set
BL$_{sym}(\EE)$ of all the continuous symmetric bilinear forms on
$\EE$. As BL$_{sym}(\EE)$ is naturally a Banach space too, the
pseudo-Riemannian metric $g$ can be regarded as a section of a
fiber bundle on $M$ with fiber BL$_{sym}(\EE)$ (see \cite[Ch
VII.1]{Lang}).

\runinhead{Riemannian metrics on Hilbert manifolds.} When the
pseudo-Riemannian metric $g$ is positive definite then we say that
it is Riemannian. As we are assuming that $\flat_p$ is a
homeomorphism,  the model space $\EE$ is then Hilberteable.
So, it will be denoted $H$, and we will consider only
Riemannian metrics on Hilbert manifolds.
Notice that, for any Riemannian
metric $g$, one has an associated Finsler metric given by
$F(v)=\sqrt{g(v,v)}$ for all $v\in TM$. So, the bounds required in
the definition of continuity for $F$ in the Finslerian case (see
(a) and (b) below formula (\ref{einducednorms})), hold here in
terms of the norm associated to the inner product $\langle \cdot,
\cdot \rangle$ of $H$. Moreover, this norm is always $C^\infty$
away from 0. Thus, any $C^k$ Hilbert manifold modelled on a separable
space $H$ admits $C^k$ partitions of the unity and, then, a
$C^{k-1}$ Riemannian metric. Riemannian metrics on Hilbert
manifolds, as well as their geodesics, are extensively studied in
the literature, see for example \cite{Lang} or, for the separable
case, \cite{Kl}. A
type of Hopf-Rinow theorem for separable Riemann
Hilbert manifolds can be found   in \cite[Th. 2.1.3]{Kl}
(including the ``Notes'' therein; recall also   \cite{At}); some related remarkable properties
can be seen in \cite{Ek}.

\runinhead{Concluding remarks and conventions.} For the
convenience of the reader, a summary on the topological and smooth-related
results commented above is provided in Figures~\ref{fig1}
and \ref{fig2}. Basic detailed background can be found in
\cite{PalaisTop_LustSch66, PalaisProcSym_CriticalPointTh68} and
\cite{AMR}. In what follows, all the objects will be {\em smooth}
i.e. as differentiable as possible according to the  discussion
above.
In the case of first
order problems (Section \ref{s2.1}), this will mean at least $C^2$ for any Banach manifold $M$ and
$C^1$ for any
 vector field $X$ on $M$. As emphasized in Remarks
\ref{rproducts} and \ref{rtimedependentX},  Finsler metrics are
required only $C^0$ at this stage. Further requirements of
smoothability will be needed for the  second order case (Section
\ref{s2.3}). In the (indefinite) finite-dimensional case (Section
\ref{s4}), the issues on smoothability are not especially relevant
and, so, the reader may either track them or just assume
$C^\infty$ smoothability.

\section{Completeness of trajectories in a positive-definite infinite-dimensional setting}
\label{s2}

This section is divided into two subsections. The first one is
devoted to the  problem of the completeness of a
vector field. We start by reviewing some results. These have essentially been known from the
seventies \cite{Go, Eb, WM} and explained in   \cite{AM,
AMR}. They are extended here to the ($C^0$) Finsler setting when possible
(Propositions \ref{pextend}, \ref{pcompleteboundedsubintervals}).
Then, the notion of {\em primarily complete} function is introduced
(Definition~\ref{dprimarycomplete}).
Primary
bounds for a vector field
allows us to give an optimal result on completeness in the Finsler Banach case,
Theorem \ref{tsublinear}. The time-dependent case is specially
discussed in Remark \ref{rtimedependentX} and the last part of the
subsection.

In the second subsection, our Theorem \ref{t1} (plus Remark
\ref{remarkth}) summarizes and extends the results on second order
differential equations in \cite{Go, Eb, WM, CRS}. The proof is
carried out in three conceptually independent steps. The first one is just
a standard reduction to the first order case. The second one deals with
technical bounds. This is carried out here just by using
systematically the simple lemma \ref{lsubsol}. In the third step, the subtleties
of the infinite dimensional case (first studied by Ebin \cite{Eb}), are stressed.

Further discussions are also provided in this second subsection. Firstly, the relation
between the previous notion of {\em primarily complete} function
and Weinstein-Marsden's  {\em positive completeness}, is
analyzed. Secondly, we consider specifically the time-dependent case.
Even though natural bounds are obtained
for the growth of the potential in
this case, we also explore some alternatives. Finally, we 
discuss the  difficulties of the generalization when the Riemannian
metrics are replaced by Finslerian ones, and we provide a simple
example  for the (standard) finite-dimensional Finsler
case.

\subsection{Complete vector fields on Finsler Banach manifolds}
\label{s2.1}

\subsubsection{Elementary criteria}

 The properties of the (local) flow $\phi$ of a vector field $X$ and, in
particular, the existence of a flow box around each point, can be
found, for example, in \cite[p. 192ff]{AMR}, \cite[p. 84ff]{Lang}
or \cite[Sect. 1.10]{Moore}. We start with a well-known result
(see for example \cite[Prop. 4.1.19]{AMR}).

\begin{proposition}\label{pextend}
Let $X$ be a  vector field on  a Banach manifold $M$, and let $c:
[0,b) \to M$ (resp. $[-b,0) \to M$) be an integral curve of $X$
with $0<b<+\infty$. Then, $c$ can be extended beyond $b$ as an
integral curve of $X$ if and only if there exists a sequence $t_n
\to b^-$ such that the sequence $\{c(t_n)\}_n$ (resp.
$\{c(-t_n)\}_n$) is convergent in $M$.
\end{proposition}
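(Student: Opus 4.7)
The forward direction is immediate: if $c$ extends to an integral curve $\tilde c$ on $[0,b+\epsilon)$ for some $\epsilon>0$, then $\tilde c$ is in particular continuous at $b$, so any sequence $t_n\to b^-$ yields $c(t_n)=\tilde c(t_n)\to \tilde c(b)\in M$. I would dispatch this in one line.

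For the nontrivial direction, suppose $t_n\to b^-$ with $c(t_n)\to p\in M$. The plan is to invoke the local flow box theorem for $X$ at the point $p$, as stated in the references cited just above the proposition (e.g.\ \cite[p.~192ff]{AMR}): there exist an open neighborhood $U$ of $p$ and a number $\delta>0$ such that for every $q\in U$ the initial value problem $\gamma'=X(\gamma)$, $\gamma(0)=q$ has a unique integral curve $\gamma_q:(-\delta,\delta)\to M$. Pick $n$ large enough that simultaneously $c(t_n)\in U$ and $b-t_n<\delta$; this is where the hypothesis that the sequence converges \emph{in $M$} is used.

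Define
\begin{equation*}
\tilde c(t)=
\begin{cases}
c(t), & t\in[0,t_n],\\
\gamma_{c(t_n)}(t-t_n), & t\in[t_n,\, t_n+\delta).
\end{cases}
\end{equation*}
The two definitions agree at $t=t_n$, both pieces are integral curves of $X$, and by uniqueness of solutions to the ODE the two pieces also agree on the overlap $[t_n,b)$ (both solve $\gamma'=X(\gamma)$ with the same initial condition at $t_n$). Hence $\tilde c$ is a well-defined integral curve of $X$ on $[0,t_n+\delta)$, and since $t_n+\delta>b$ by construction, this genuinely extends $c$ beyond $b$.

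The only real content is the flow box theorem (local existence with a neighborhood-uniform time of existence $\delta$), which in the Banach setting is a standard consequence of the Banach fixed point theorem applied to the Picard iteration, and which I would cite rather than reprove; everything else is bookkeeping. The case $[-b,0)\to M$ is identical after reversing time (applying the result to $-X$), so I would handle it with a single remark at the end.
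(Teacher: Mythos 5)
Your argument is correct and is essentially the paper's own proof: both directions are handled the same way, with the sufficiency resting on the flow box at the limit point $p$, the choice of $n$ with $b-t_n<\delta$ (resp.\ $\epsilon$), and the resulting extension of $c$ past $b$. The only difference is that you spell out the gluing and the uniqueness argument on the overlap, which the paper leaves implicit.
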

\begin{proof}
The necessity of the condition is obvious. For its sufficiency,
let $p\in M$ be the limit of the sequence. The existence of a flow
box of $X$ at $p$  ensures the existence of a neighborhood $U$ of
$p$ and some $\epsilon>0$ such that the integral curves of $X$ at
any $p'\in U$ are defined on $(-\epsilon, \epsilon)$. So, taking
$n$ large so that $b-t_n<\epsilon$ the integral curve through
$c(t_n)$ will be defined on $[0,\epsilon)$ and $c$ will be
extensible through $b$.
\end{proof}
Accordingly, we will say that an integral curve $c$ of $X$ defined
on some interval $I$ of $\R$ is {\em complete} if it can be
extended as an integral curve of $X$ to all $\R$, and $X$ will be
complete if so are  its integral curves.

\begin{remark} (i) This result follows in the infinite-dimensional case
as
well as in the finite-dimensional one. However, the application in
the latter case is easier, as $M$ is then locally compact.
For example, Proposition \ref{pextend} yields directly that, if
the support of $X$ is compact (in particular, if $M$ is compact
and, thus, finite-dimensional) then $X$ is complete.

(ii) Analogously, one can prove that if a  Banach manifold $(M,F)$
admits a $C^1$-proper map $f: M\rightarrow \R$ (i.e.
$f^{-1}([a,b])$ is compact for any compact  $[a,b]\subset \R$),
then a vector field $X$ is complete whenever
\begin{equation}\label{eproper}
|X_p(f)| \leq C_1 |f(p)| +C_2 \end{equation} for some $C_1,C_2>0$
and all $p\in M$. In fact,  (\ref{eproper}) implies a bound for
the derivative of $\log(C_1|f\circ c|+C_2)$. If the domain of
the integral curve $c$ were bounded,   a bound for $f$ on $c$ would be obtained too. As $f$ is proper, the result would follow then from Proposition \ref{pextend} (see \cite[2.1.20]{AM} or
\cite[4.1.21]{AMR} for more details). Even though proper maps are
well behaved in Banach manifolds (for example, they are closed
maps \cite{PalaisProcAMS_WhenProperMapsAreClosed}) results as the
previous one are used typically in the finite-dimensional case
(putting, for example, $f=C_1|x|^2+C_2$ on a complete Riemannian
$n$-manifold).
\end{remark}
The following criterion on completeness for Finsler Banach
manifolds holds as in the case of Riemann Hilbert ones or Banach
spaces (compare with \cite[Prop. 2.1.2]{AM} or \cite[Prop.
4.1.22]{AMR}).

\begin{proposition}\label{pcompleteboundedsubintervals} Let $(M,F)$ be a complete Finsler Banach
 manifold and $X$  a  vector field on $M$.
 If $c: I\subset
\R\rightarrow M$ is an integral curve of $X$ and $F(\dot c)$ is
bounded on bounded subintervals of $I$, then $c$ is complete.

\end{proposition}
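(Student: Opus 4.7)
The plan is to argue by contradiction using Proposition~\ref{pextend} together with the fact that on a Finsler Banach manifold, the distance between two points on a smooth curve is bounded by the integral of $F(\dot c)$ along that curve. The strategy is to show that if the maximal interval of existence of $c$ were not all of $\R$, then the bound on $F(\dot c)$ on a bounded subinterval approaching a finite endpoint would force $c$ to be a Lipschitz curve there, hence to have a limit in $(M,d_F)$ by metric completeness, contradicting maximality.

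More concretely, suppose the maximal interval of existence containing the given domain is of the form $[\alpha,b)$ with $b<\infty$ (the left-endpoint case being symmetric). Fix any $t_0$ in this interval; the restriction $c|_{[t_0,b)}$ is an integral curve of $X$ on a bounded interval, so by hypothesis (applied after noting that the extension remains an integral curve, and $F(\dot c(t)) = F(X(c(t)))$ is controlled on bounded subintervals of the maximal domain) there exists $K>0$ with $F(\dot c(t))\leq K$ for all $t\in [t_0,b)$. Since $d_F$ is the length distance associated with $F$, for any $s,t\in [t_0,b)$ with $s<t$,
\begin{equation*}
d_F(c(s),c(t))\;\leq\;\int_s^t F(\dot c(\tau))\,d\tau\;\leq\;K\,(t-s).
\end{equation*}

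Hence $c$ is Lipschitz on $[t_0,b)$. In particular, for any sequence $t_n\to b^-$, the sequence $\{c(t_n)\}$ is Cauchy for $d_F$, and by the assumed metric completeness of $(M,F)$ it converges to some $p\in M$. Proposition~\ref{pextend} then allows us to extend $c$ as an integral curve of $X$ beyond $b$, contradicting the maximality of $[\alpha,b)$. Therefore $b=+\infty$, and the analogous argument on the left yields extension to all of $\R$, proving completeness of $c$.

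The argument is essentially routine once Proposition~\ref{pextend} is in hand; the only step that requires a moment of care is the passage from the length estimate to the Cauchy property, which uses in an essential way that $d_F$ truly metrizes the manifold topology and is the length distance of $F$ (a point established earlier in the preliminaries on Finsler Banach manifolds via paracompactness and regularity). No smoothness of $F$ beyond $C^0$ is required, in line with Remark~\ref{rproducts}.
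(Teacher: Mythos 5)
Your argument is correct and follows essentially the same route as the paper's: bound $F(\dot c)$ near the finite endpoint, use the length estimate $d_F(c(s),c(t))\leq\int_s^t F(\dot c)\leq K(t-s)$ to get a Cauchy sequence, invoke metric completeness to obtain a limit, and conclude via Proposition~\ref{pextend}. The only cosmetic difference is that you phrase it as a contradiction with maximality rather than as a direct extension, which changes nothing of substance.
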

\begin{proof}
Assume with no loss of generality that $I=[0,b), b<\infty$,  let
$A$ be the assumed bound and choose $\{t_n\}\nearrow b$. The
associated distance $d$ satisfies then:
$$
d(c(t_n),c(t_m)) \leq \int_{t_n}^{t_m}F(\dot c(t))dt \leq
A|t_n-t_m|.
$$
So, $\{c(t_n)\}_n$ is a Cauchy sequence, which becomes convergent
to some limit $p$ by the completeness of $(M,F)$. Then,
Proposition \ref{pextend} can be applied to $\{c(t_n)\}_n$.

\end{proof}

\begin{remark}\label{rtimedependentX} ({\em The time-dependent case.})
The results in the previous two propositions can be extended to
the case when $X$ is  time-dependent, and defined for all the
values of the time.

More precisely, consider
 the product manifold $M\times \R$, let $\Pi_\R:M\times \R\rightarrow \R$, $\Pi_M:M\times \R\rightarrow
 M$ be the natural projections, and denote by $t$ the natural coordinate
 on $\R$. We say that $X$ is a {\em time-dependent vector field on} $M$
 if it is a smooth section of the pull-back bundle  $\Pi_M^\star(TM)$,
whose
 base is $M\times \R$ and each fiber comes from a tangent space to $M$.
Such a vector field yields naturally a (time-independent) vector
field $\hat X$ on $M\times \R$ which satisfies both, $d\Pi_M\hat
X_{(p_0,t_0)}$ is naturally identifiable to the natural
projection of $X_{(p_0,t_0)}$ on $T_{p_0}M$ and $d\Pi_\R\hat
X_{(p_0,t_0)} =
\partial_t|_{t_0}$, for all $(p_0,t_0)\in
M\times~\R$.

To speak about the integral curves of $X$ makes a natural sense (see
for example \cite[Ch. IV]{Lang}) and becomes equivalent to
consider the integral curves of $\hat X$; in fact, $c$ will be an
integral curve of $X$ if and only if $\hat c: t\mapsto (c(t),t)$
is an integral curve of $\hat X$. So, Proposition \ref{pextend} is
extended directly to a time-dependent $X$.

To extend Proposition \ref{pcompleteboundedsubintervals}, recall that,
if $(M,F)$ is a Finsler Banach manifold, then $M\times \R$ admits
a natural
 $C^0$ Finsler metric $\hat F$ obtained as the direct sum of $F$ and the usual one on
 $\R$ (see Remark \ref{rproducts}). Clearly, $\hat F$ will be complete if and only if so is
 $F$. Moreover, the $F$-length of the integral curve $c$ of $X$ is bounded
 on finite intervals if and only so is the $\hat F$-length of the integral curve $\hat c$ of $\hat
 X$, as required.
 \end{remark}

\subsubsection{Applications} Next, we will apply previous results to
simple but general situations. But, previously, we consider the following technical
elementary result for future referencing (see for example \cite[Lemma 1.1]{Te}).
\begin{lemma}\label{lsubsol}
Consider the equation
\begin{equation}\label{sub1}
\dot u\ =\ f(t,u) \quad  \hbox{on} \quad  \hbox{$[t_0, T)$,}
\end{equation}
where $f \in C^0(\R^2,\R)$ is locally Lipschitz in its second variable, 
and let $w=w(t)$ be a subsolution of the differential equation
i.e.,
\begin{equation}
\label{elema1}
\dot w(t) < f(t,w(t)) \qquad   \forall t\in [t_0, T).
\end{equation} Then, for every solution
$u=u(t)$ of (\ref{sub1}) such that $w(t_0) \le u(t_0)$ we have
\begin{equation}
\label{elema2}
w(t) < u(t) \quad \hbox{for all} \quad  t \in (t_0, T).
\end{equation}
The same conclusion \eqref{elema2} holds if $w$ is only locally
Lipschitzian and  the inequality \eqref{elema1} occurs when $\dot w(t)$
is replaced  by some local Lipschitz bound around each $t$.
\end{lemma}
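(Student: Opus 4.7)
The statement is a standard strict comparison principle for scalar ODEs, so my plan is to reduce it to a first-contact contradiction argument based on the sign of $\dot w - \dot u$ at a first coincidence point.

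First I would establish that the strict inequality $w(t) < u(t)$ holds for all $t$ in some right neighborhood of $t_0$. If $w(t_0) < u(t_0)$ this is immediate by continuity. If $w(t_0) = u(t_0)$, then \eqref{elema1} gives $\dot w(t_0) < f(t_0, w(t_0)) = f(t_0, u(t_0)) = \dot u(t_0)$, so the difference $u - w$ has strictly positive derivative at $t_0$ and therefore becomes strictly positive immediately after $t_0$.

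Next, I would argue by contradiction. Suppose \eqref{elema2} fails, and let
\[
t_1 := \inf\{\,t \in (t_0, T) : w(t) \ge u(t)\,\}.
\]
By the first step $t_1 > t_0$, and by continuity $w(t_1) = u(t_1)$ with $w(t) < u(t)$ on $(t_0, t_1)$. Hence the function $w - u$ attains the value $0$ at $t_1$ from strictly negative values on the left, so its left derivative at $t_1$ satisfies $\dot w(t_1) - \dot u(t_1) \ge 0$. Using $\dot u(t_1) = f(t_1, u(t_1)) = f(t_1, w(t_1))$, this yields $\dot w(t_1) \ge f(t_1, w(t_1))$, which directly contradicts \eqref{elema1}.

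Finally, for the extension to locally Lipschitzian $w$, the same argument works provided one reinterprets $\dot w(t_1)$ as the relevant one-sided Dini derivative (or the local Lipschitz upper bound postulated in the statement): the fact that $w - u \le 0$ on a left neighborhood of $t_1$ with equality at $t_1$ still forces the lower-left Dini derivative of $w$ at $t_1$ to be at least $\dot u(t_1)$, and this again clashes with the strict subsolution hypothesis. I do not expect any of these steps to be delicate; the only point that requires a little care is the case $w(t_0) = u(t_0)$, where one must use \eqref{elema1} at the initial time itself rather than just continuity, and, in the Lipschitz version, the correct choice of one-sided derivative to compare with $f$ at the first contact point.
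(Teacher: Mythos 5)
Your proof is correct and follows essentially the same route as the paper's: the paper's own argument is exactly the first-contact contradiction via $\Delta:=w-u<0$ near $t_0$ and $\dot\Delta(t_1)<0$ at a first zero $t_1$. You merely spell out the details the paper compresses (the initial case $w(t_0)=u(t_0)$ and the one-sided derivative comparison at $t_1$), so no further comment is needed.
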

The proof follows just recalling that $\Delta:=w-u<0$ close to
$t_0$ by the assumptions and, if there were a first point such
that $\Delta(t_1)=0$, then $\dot\Delta(t_1)<0$ (or an analogous
inequality involving a local Lipschitz bound) holds, a contradiction.


\runinhead{Estimates of the growth for completeness.} Let us
introduce some auxiliary definitions.

\begin{definition}\label{dprimarycomplete}
A (locally Lipschitz)  function $\alpha: [0,\infty)
 \rightarrow R$ is {\em primarily complete} if it is
 positive, non-decreasing  and satisfies:
\begin{equation}\label{esublinealpha} \int_0^{\infty}\frac{dx}{\alpha(x)}=\infty
\end{equation}

A vector field $X$ on a Finsler Banach manifold $(M,F)$ is {\em
primarily bounded} if there exists a primarily complete function
$\alpha$, which be called a {\em bounding function}, such that
\begin{equation}\label{esubalpha}
 F(X_p)< \alpha(|p|))\quad \quad\quad\quad \forall p\in M.
\end{equation}
In particular, $X$ {\em grows at most linearly} if it is primarily
bounded by an affine bounding function, i.e.:
\begin{equation}\label{esublineal} F(X_p)<C_0+C_1|p| \quad\quad\quad \forall p\in
M,
\end{equation} for some constants $C_0,C_1>0$.
\end{definition}

\begin{remark}\label{rpositivelybounded}
The best polynomial candidate for the bounding function $\alpha$ has
degree one as, clearly, no polynomial of higher degree can be a
primarily complete function. Nevertheless, a slightly faster
growth is allowed for non-polynomial functions. For example,
$\alpha$ will be primarily complete if it grows as $x \cdot
\log x \cdot \log (\log x)$ for large $x$ (see also the discussion
in the last part of Section \ref{s3.2.1}).
\end{remark}

Now, we can give a general bound for the completeness of vector
fields.
\begin{theorem}\label{tsublinear} Any primarily bounded vector field on   a complete Finsler Banach
 manifold  $(M,F)$ is complete.

\end{theorem}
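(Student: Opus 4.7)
The plan is to reduce the completeness of $X$ to Proposition \ref{pcompleteboundedsubintervals} by showing that along any integral curve, the distance to the base point grows no faster than the solution of an ODE that exists globally in time. Concretely, let $c:[0,b)\to M$ be an inextensible integral curve of $X$ with $b<\infty$ (the negative case is handled identically by reversibility, since $F(-X_p)=F(X_p)<\alpha(|p|)$). Set $u(t):=|c(t)|=d(c(t),p_0)$. The first step is to bound $F(\dot c(t))=F(X_{c(t)})<\alpha(u(t))$ on $[0,b)$ by controlling $u$.

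Next, I would check that $u$ is locally Lipschitz and admits the required pointwise Lipschitz bound. The reverse triangle inequality together with the length bound for $d$ gives, for $s<t$ close to any fixed $t_0\in[0,b)$,
\[
|u(t)-u(s)|\ \le\ d(c(t),c(s))\ \le\ \int_{s}^{t}F(\dot c(\tau))\,d\tau\ =\ \int_{s}^{t}F(X_{c(\tau)})\,d\tau,
\]
so $u$ is locally Lipschitz with local bound at $t_0$ equal to $F(X_{c(t_0)})<\alpha(u(t_0))$ (here the monotonicity and continuity of $\alpha$, plus continuity of $u$, are used to promote the pointwise strict inequality to a genuine Lipschitz bound on a neighbourhood of $t_0$).

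Now I would invoke the comparison Lemma \ref{lsubsol} in its locally-Lipschitzian form. Consider the Cauchy problem
\[
\dot v\ =\ \alpha(v),\qquad v(0)=u(0),
\]
whose unique maximal solution $v$ satisfies $\int_{v(0)}^{v(t)}dx/\alpha(x)=t$ by separation of variables. Since $\alpha$ is primarily complete, $\int_0^{\infty}dx/\alpha(x)=\infty$, so $v$ cannot blow up in finite time and is defined on all of $[0,\infty)$, in particular it is bounded on $[0,b]$ by some constant $K$. Lemma \ref{lsubsol} (Lipschitz version) applied with $w=u$ and $f(t,x)=\alpha(x)$ then yields $u(t)<v(t)\le K$ for all $t\in(0,b)$.

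Finally, by monotonicity of $\alpha$, $F(\dot c(t))<\alpha(u(t))\le\alpha(K)$ on $[0,b)$, so $F(\dot c)$ is bounded on the bounded subinterval $[0,b)$. Proposition \ref{pcompleteboundedsubintervals} (which uses completeness of $(M,F)$) then forces $c$ to be extensible past $b$, contradicting inextensibility. The main obstacle I expect is the technical justification of the comparison argument: making the pointwise inequality $F(X_p)<\alpha(|p|)$ yield a usable local Lipschitz bound for $u$, and checking that Lemma \ref{lsubsol} applies in this weakened form. Everything else is a routine invocation of the results already gathered.
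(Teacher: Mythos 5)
Your proposal is correct and follows essentially the same route as the paper's proof: bound $w(t)=|c(t)|$ as a (locally Lipschitz) subsolution of $\dot v=\alpha(v)$, use the primary completeness condition $\int_0^\infty dx/\alpha(x)=\infty$ to see that the comparison solution exists on all of $[0,\infty)$, apply Lemma \ref{lsubsol} to bound $w$ on $[0,b)$, and then conclude via the monotonicity of $\alpha$ and Proposition \ref{pcompleteboundedsubintervals}. The only cosmetic difference is that the paper normalizes $p_0=c(0)$ so that $w(0)=0$, while you carry a general base point; your explicit treatment of the local Lipschitz bound is exactly the technical point the paper addresses with the Lipschitz version of Lemma \ref{lsubsol}.
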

\begin{proof}
Let  $c: I\rightarrow M$ be an integral curve of $X$. With no loss
of generality, assume $I=[0,b)$,  $0\leq t_0<t_1<b$,  and  choose $p_0=c(0)$ in the
notation introduced in (\ref{enotationdistancetop0}). Then:
\begin{equation}\label{ealpha}
||c(t_1)|-|c(t_0)||\leq \int_{t_0}^{t_1}F(\dot c(s))ds < \int_{t_0}^{t_1} \alpha(|c(s)|)ds,
\end{equation} where $\alpha$ is the bounding function. Thus, putting $w(t)= |c(t)|$ we can assume: $$\dot w(t) <
\alpha(w(t)),
$$
(or the analogous inequality for  local Lipschitz constants). The unique inextensible solution $w_0$ of the equality
$$\dot w_0(t)=\alpha(w_0(t)) \quad \quad w_0(0)=w(0)(=0)$$
is defined for all $t\in [0,\infty)$, as its inverse is determined
as $w\mapsto t(w)=\int_0^w d\bar w/\alpha(\bar w)$ and
(\ref{esublinealpha}) holds. So, from lemma \ref{lsubsol} one has
$$w(t)<w_0(t)<w_0(b) \quad \quad \forall t\in (0,b).$$ As $\alpha$
is non-decreasing, equation (\ref{esubalpha}) yields the  bound
$F(\dot c) \leq \alpha(w_0(b))$ so that Proposition
\ref{pcompleteboundedsubintervals} is applicable.
\end{proof}
\begin{remark}
By considering on $\R$ a vector field type
$X_{x_0}=\alpha(x_0)\partial_x$ one can check the optimality of
Theorem \ref{tsublinear} and, in particular,  the optimality (in
the sense discussed in Remark \ref{rpositivelybounded}) of the at
most linear growth of $X$ to ensure completeness. Of course, a
vector field with a {\em superlinear} growth such as
$X=y^2\partial_x$ may be complete. In fact, in order to ensure
completeness,  only
 the growth of $X$ along the direction of its integral
curves becomes relevant. This underlies in the fact that the sum
of two complete vector fields $X,Y$ may be incomplete (put $
Y=x^2\partial_y$ and $X$ as before) and may suggest more refined
hypotheses for completeness in Hilbert spaces (compare with
\cite[Exercise 2.2H]{AMR}).
\end{remark}

\runinhead{Time-dependent case.} As in the case of the criterions
on completeness, Theorem~\ref{tsublinear} can be extended  to the
case of a time-dependent vector field $X$. In fact, the proof
works in a completely analogous way (with the observations in
Remark \ref{rtimedependentX}), if the inequality in
(\ref{esublinealpha}) is  regarded as $F(X_{(p,t)})< \alpha(|p|)$
for all $(p,t)\in M\times \R.$ Nevertheless, one can be a bit more
accurate.

\begin{definition} A time-dependent vector field $X$ on a Finsler Banach
manifold is {\em primarily bounded along finite times} if there
exists a primarily complete function $\alpha$ and a continuous
function $C(t)>0$ such that
$$F(X_{(p,t)})< C(t) \alpha(|p|) \quad \quad \quad \quad \forall (p,t)\in M\times \R .$$
In particular, $X$ {\em grows at most linearly along finite times}
when $\alpha$ can be chosen affine or, equivalently, when
\begin{equation}\label{esublinealatfinitetimes} F(X_{(p,t)})<C_0(t)+C_1(t)|p| \quad\quad\quad \forall (p,t)\in
M\times \R
\end{equation} for some functions $C_0(t), C_1(t) >0$
\end{definition}

\begin{corollary}
Let $X$ be a time-dependent vector field on a complete Finsler
Banach $(M,F)$. If  $X$ is primarily bounded along finite times
then it is complete.
\end{corollary}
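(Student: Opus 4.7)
The plan is to combine the time-dependent reduction explained in Remark \ref{rtimedependentX} with the proof of Theorem \ref{tsublinear}. The only new ingredient is the continuous ``time factor'' $C(t)$, which on any finite time interval can be replaced by a constant, so the argument reduces to the autonomous case already settled.

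More precisely, I would fix an inextensible integral curve $c$ of $X$ and argue by contradiction, assuming it is defined only on a proper subinterval of $\R$. By symmetry it suffices to treat $I=[0,b)$ with $0<b<\infty$. Choose the base point $p_0=c(0)$ in the notation of (\ref{enotationdistancetop0}) and set $w(t)=|c(t)|$. Since $[0,b]$ is compact and $C:\R\to(0,\infty)$ is continuous, the constant $K:=\sup_{t\in[0,b]}C(t)$ is finite, and the function $\tilde\alpha:=K\alpha$ remains primarily complete (positivity and monotonicity are inherited, and $\int_0^\infty dx/\tilde\alpha(x)=K^{-1}\int_0^\infty dx/\alpha(x)=\infty$). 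Along $c$ one then has
\begin{equation*}
F(\dot c(t))=F(X_{(c(t),t)})<C(t)\,\alpha(|c(t)|)\le \tilde\alpha(w(t)), \qquad t\in[0,b).
\end{equation*}

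From here the proof of Theorem \ref{tsublinear} applies verbatim: $w$ is locally Lipschitz with a local Lipschitz bound controlled by $F(\dot c)$, and the unique inextensible solution $w_0$ of $\dot w_0=\tilde\alpha(w_0)$ with $w_0(0)=0$ is defined on all of $[0,\infty)$ by the primary completeness of $\tilde\alpha$. Lemma \ref{lsubsol} then yields $w(t)<w_0(b)$ on $(0,b)$, so $F(\dot c)<\tilde\alpha(w_0(b))$ is bounded on $[0,b)$. Proposition \ref{pcompleteboundedsubintervals}, invoked in the time-dependent form of Remark \ref{rtimedependentX} via the associated time-independent field $\hat X$ on $(M\times\R,\hat F)$, then extends $c$ beyond $b$, contradicting inextensibility.

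The only subtlety worth flagging is that one should not attempt to apply Theorem \ref{tsublinear} directly to $\hat X$ on the product $M\times\R$: the induced bound on $\hat F(\hat X_{(p,t)})$ involves $C(t)\alpha(|p|)$ as a function of the direct-sum distance $|(p,t)|_{\hat F}$, and this need not be majorised by any primarily complete function of that distance (take, e.g., $C(t)=e^t$ and $\alpha(x)=x$). The finite-time hypothesis on $C$ is precisely tailored to sidestep this by compactness of $[0,b]$, which is what makes the proof essentially routine; no genuine obstacle arises beyond the bookkeeping above.
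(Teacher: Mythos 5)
Your proof is correct and follows essentially the same route as the paper: restrict to a finite interval $[0,b]$, use compactness to absorb the continuous factor $C(t)$ into a constant so that $K\alpha$ is again primarily complete, run the comparison argument of Theorem \ref{tsublinear} via Lemma \ref{lsubsol} to bound $F(\dot c)$, and conclude with Proposition \ref{pcompleteboundedsubintervals} in its time-dependent form from Remark \ref{rtimedependentX}. Your closing caveat about not applying Theorem \ref{tsublinear} directly to $\hat X$ on all of $M\times\R$ is a valid and worthwhile clarification of why the finite-time hypothesis is stated as it is.
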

\begin{proof}
Reasoning  with an integral curve $c$ defined on $[0,b)$ as in the
proof of Proposition \ref{pcompleteboundedsubintervals}, notice
that the inequality (\ref{esublinealatfinitetimes}) for all the
pairs $(p,t)\in M\times [0,b]$ also yields a time independent
inequality as (\ref{esublineal}) with
 $C_i=$Max$_{t\in
[0.b]}\{C_i(t)\}, i=0,1$. Then, reason as in Remark
\ref{rtimedependentX} taking into account that $\hat X$ is
primarily bounded (on $M\times [0,b]$) if and only if so does
 $X$.
\end{proof}

\subsection{Completeness for 2nd order trajectories}\label{s2.3}

\subsubsection{General result on  Riemann Hilbert manifolds}
\label{s3.2.1}

 The next result, stated on a Riemann Hilbert manifold $(M,g)$, will summarize those
in \cite{Go,Eb,AM,CRS}. To state it, recall that the notion of
time-dependent vector field on $M$ in Remark \ref{rtimedependentX}
can be directly translated to (continuous, linear)  endomorphism
fields, which will be regarded here as sections of a fiber bundle on $M\times \R$
with fiber at each $(p,t)\in M\times \R$ equal to the vector space
of bounded linear operators $T_{(p,t)}(M\times \R)\rightarrow
T_{(p,t)}(M\times \R)$ which vanish on $(0,\partial_t)_{(p,t)}$.
Given such a field $E$, we will decompose it as $E=S+H$ where $S$
denotes its self-adjoint part $(S =(E+E^\dagger) /2)$, and $H$ the
skew-adjoint one.
 A time-dependent or non-autonomous
potential  means just a smooth map $V:M\times \R\rightarrow \R$,
then, the notation $\partial V/\partial t :M\times \R\rightarrow
\R$ makes a natural sense, and $\nabla^MV$  denotes the time
dependent vector field on $M$ obtained by taking the gradient of
$V$ at each slice $t=$constant with respect to $g$, i.e.,
$dV(X(p,t),0)=g_p(\nabla^MV(p,t),X(p,t))$ for $(X(p,t),0)\in
T_{(p,t)}(M\times \R)$.
The pointwise norm induced by $g$ in any space of tensor fields
will be denoted $\parallel \cdot \parallel$.

\begin{theorem}\label{t1}
Let $(M,g)$ be a complete Riemann Hilbert manifold, and consider a
endomorphism field $E=S+H$, a vector field $R$ and
a potential $V$ on $M$, all of them time--dependent and smooth.
Assume that:

\begin{description}[(i)]
\item[(i)] $S$ is uniformly bounded along finite times, i.e.,
$\parallel S_{(p,t)} \parallel \leq C_0(t)$ for all $(p,t)\in
M\times \R$,

\item[(ii)] $R$ grows at most linearly along finite times, i.e.,
$\parallel R_{(p,t)} \parallel \leq C_0(t) + C_1(t) |p|$ for all
$(p,t)\in M\times \R$, and

\item[(iii)] both, $-V$ and $|\partial V/\partial t|$ grow at most
quadratically along finite times, i.e., they are bounded by
$C_0(t)+ C_2(t)|p|^2$,

\end{description}  where $C_i(t), i=0,1,2$, denote positive
functions.  Then,  the inextensible solutions of
\begin{equation}\label{e}
\frac{D\dot\gamma}{dt}(t)\ = E_{(\gamma(t),t)}\
\dot\gamma(t) + R_{(\gamma(t),t)}  - \nabla^M V (\gamma(t),t), 
\end{equation}
are complete.
\end{theorem}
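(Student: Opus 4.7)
The plan is to apply the first-order machinery of Section \ref{s2.1}. Equation (\ref{e}) lifts, in the standard way, to a time-dependent vector field $\hat Z$ on $TM$ whose integral curves are precisely the maps $t \mapsto (\gamma(t), \dot\gamma(t))$. By Proposition \ref{pextend} (extended to time-dependent fields as in Remark \ref{rtimedependentX}), it suffices to show that, for an inextensible solution $\gamma : [0,b) \to M$ with $b < \infty$, both $\gamma(t_n)$ and $\dot\gamma(t_n)$ converge along some sequence $t_n \to b^-$, the latter in the topology of $TM$ endowed with a complete Riemannian metric (e.g.\ a Sasaki-type lift of $g$). This splits naturally into a classical energy bound and a more delicate infinite-dimensional convergence step.

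For the energy bound, set $v(t) = \|\dot\gamma(t)\|$, $r(t) = |\gamma(t)|$, and
\[\mathcal{E}(t) := \tfrac{1}{2}v(t)^2 + V(\gamma(t),t).\]
Metric compatibility combined with the identity $\tfrac{d}{dt}V(\gamma,t) = g(\nabla^M V, \dot\gamma) + \partial_t V$ gives, after cancellation of the $\nabla^M V$ terms and noting that the skew-adjoint $H$ contributes nothing to $g(E\dot\gamma,\dot\gamma)$,
\[\dot{\mathcal{E}}(t) = g(S\dot\gamma,\dot\gamma) + g(R,\dot\gamma) + \partial_t V(\gamma(t),t).\]
The decisive feature is that $\nabla^M V$, the only term of (\ref{e}) not directly controlled by the hypotheses, has disappeared. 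On a fixed finite $[0,b]$ the continuous weights $C_i(t)$ from (i)--(iii) are bounded above by some $\tilde C$. Setting
\[K(t) := \mathcal{E}(t) + (\tilde C + 1)\bigl(1 + r(t)^2\bigr),\]
the lower bound $V \ge -\tilde C(1+r^2)$ coming from (iii) yields $K \ge \tfrac12 v^2 + 1 + r^2 \ge 1$, so both $v^2$ and $r^2$ are controlled by a constant multiple of $K$. Using $|\dot r| \le v$ (valid almost everywhere, so that $K$ is locally Lipschitz) together with $rv \le \tfrac12(r^2+v^2)$, a direct calculation yields a linear inequality $\dot K(t) \le C' K(t)$ in the local-Lipschitz sense. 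Comparison via Lemma \ref{lsubsol} with the (globally defined) solution of $\dot K_0 = C'(1 + K_0)$, $K_0(0) = K(0)$, then bounds $K$, and hence $v$ and $r$, on $[0,b)$.

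With $v$ bounded, completeness of $(M,g)$ and the estimate $|\dot r| \le v$ give $\gamma(t) \to p_* \in M$ as $t \to b^-$, exactly as in the proof of Proposition \ref{pcompleteboundedsubintervals}. The remaining task is to extract convergence of $\dot\gamma(t_n)$ in $TM$, which is where infinite dimensions bite: a bounded sequence in a Hilbert space need not have any convergent subsequence. To overcome this, I would exploit that $\gamma$ is eventually confined to a small chart neighborhood $U$ of $p_*$, on which the smooth fields $E$, $R$, and $\nabla^M V$ are continuous, hence uniformly bounded. Combined with the bound on $v$, equation (\ref{e}) then gives a uniform bound on $\|D\dot\gamma/dt\|$ on some $[t_0, b)$, and a parallel-transport (or local-trivialization) estimate along $\gamma$ shows that $\dot\gamma$ is Cauchy and converges to some $v_* \in T_{p_*}M$. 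Proposition \ref{pextend} applied to $\hat Z$ at $(p_*, v_*, b)$ then extends $\gamma$ past $b$, contradicting inextensibility.

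The main obstacle is precisely this final convergence step. The energy computation is the classical Weinstein--Marsden/Gordon trick, rigged so that the a priori uncontrolled $\nabla^M V$-term in (\ref{e}) is absorbed into $\tfrac{d}{dt}V$; from there, Lemma \ref{lsubsol} reduces matters to elementary ODE comparison, and the polynomial growth rates in (i)--(iii) enter only through crude arithmetic inequalities. What is genuinely delicate---and what Ebin's argument addresses---is that merely bounding $\|\dot\gamma\|$ does not force convergence of $\dot\gamma(t_n)$ in $TM$ when $\dim M = \infty$; it is the \emph{local} smoothness of $\nabla^M V$ near the limit point, rather than any global hypothesis on its norm, that ultimately closes the argument.
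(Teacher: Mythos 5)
Your proposal is correct, and it follows the same three-step architecture as the paper (reduction to a first-order system on the tangent bundle; an energy estimate killing the $\nabla^M V$ term; a separate argument for convergence of $\dot\gamma$ in infinite dimensions), but both of the substantive steps are executed differently. In Step 2 the paper does not use a single Gronwall inequality: it integrates the energy inequality, introduces the length $l(t)=\int_0^t\sqrt{u}$, and runs a chain of comparisons via Lemma \ref{lsubsol} culminating in $\dot l<\sqrt{A+Bl^2}$ and a $\sinh$ bound. Your Lyapunov function $K=\mathcal{E}+(\tilde C+1)(1+r^2)$ with the linear inequality $\dot K\leq C'K$ (noting that $r$ is only locally Lipschitz, which Lemma \ref{lsubsol} explicitly tolerates) is cleaner and entirely adequate for the quadratic hypotheses as stated; the paper's more roundabout route through $l$ is what later lets the quadratic bound on $-V$ be relaxed to bounds by positively complete functions, a refinement your linear Gronwall would not deliver directly. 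In Step 3 the paper invokes Ebin's theorem that the Sasaki metric on $TM$ is complete when $g$ is, decomposes the lifted field as $G_0+G_1+G_2$, bounds it in the Sasaki metric near the limit point, and applies Proposition \ref{pcompleteboundedsubintervals}; you instead localize in a chart around $p_*$, bound $\|D\dot\gamma/dt\|$ there (local boundedness of $E$, $R$, $\nabla^MV$ and of the connection coefficients follows from continuity at $(p_*,b)$, not from any compactness), and deduce that the parallel-transported velocity is Lipschitz in $t$ with values in a fixed complete Hilbert space, hence Cauchy. Both resolutions of the infinite-dimensional difficulty are valid; yours avoids the Sasaki machinery at the price of a slightly more hands-on local estimate, and you have correctly identified that this convergence of $\dot\gamma(t_n)$, not the velocity bound itself, is the genuinely delicate point.
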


\begin{proof} In order to clarify the ideas, the proof is divided into three steps.

{\em Step 1: Reduce the problem to the completeness of a vector
field on the tangent bundle.} The  second order equation (\ref{e})
allows to define a vector field $G$ on the manifold $T(M\times
\R)$ such that each solution $\gamma$ of (\ref{e}) generates an
integral curve $t\mapsto (\gamma'(t),1)$ of $G$. This is standard
(see for example \cite[Ch. 3]{AM} or, for explicit details on the
time-dependent case, \cite[Section 3.1]{CRS}) and, so, the problem
will be reduced to apply the criterions in Propositions
\ref{pextend} and \ref{pcompleteboundedsubintervals} to $G$.

{\em Step 2: Find a bound for the velocity of any solution $\gamma$ of
(\ref{e}), by using the hypotheses (i) to (iii).} With no loss of
generality, let $\gamma: [0,b) \rightarrow M, b<\infty$, be a
solution of (\ref{e}) whose extendability to $b$ is to be
determined,
 let $u(t)=g(\dot \gamma(t),\dot \gamma(t))$ the
function to be bounded, and choose the base point
 $p_0=\gamma(0)$ for (\ref{enotationdistancetop0}).
Taking in (\ref{e}) the product  by $\dot \gamma$:
$$
\frac{1}{2}\dot u(t) =\ g(S_{(\gamma(t),t)}
\dot\gamma(t),\dot\gamma(t)) + g(R_{(\gamma(t),t)},\dot\gamma(t))
- \left(\frac{d}{dt}V(\gamma((t),t)-\frac{\partial V}{\partial
t}(\gamma((t),t)\right)
 $$
so that taking pointwise norms and  simplifying the notation:
\begin{equation}\label{e1}
\begin{array}{ll}
\frac{d}{dt}(\frac{1}{2}u+ V) &\leq  \parallel S \parallel u +
\parallel R
\parallel \sqrt{u} + \partial V/\partial t \\ &
\leq (\parallel S \parallel + 1/2) u + \parallel R
\parallel^2/2 + \partial V/\partial t
\end{array}\end{equation}
Using the bounds (i), (ii), (iii) and taking into account that, as
the $t$ coordinate is confined in  the compact interval $[0,b]$,
the $t$-dependence of these bounds can be dropped:
\begin{equation}\label{e2} \frac{d}{dt}(u+2V) \leq A_0 + A_1 u
+ A_2 |\gamma|^2 \end{equation} for some constants $A_0, A_1,
A_2>0$. Consider the function $l(t)=\int_0^t\sqrt u$, $t\in[0,b)$
which provides the length of $\gamma$. Clearly:
$$
|\gamma(t)|^2 \leq l(t)^2 \quad \quad \hbox{and} \quad \quad
\int_0^t l(\bar t)^2 d\bar t \leq b \cdot l(t)^2 \quad \quad
\forall t\in [0,b), $$
the latter as $l$ is
nondecreasing. Using these inequalities and integrating in
(\ref{e2}):
$$
u(t)-A_1\int_0^t u \leq A_0' -2 V(\gamma(t),t)+ A_2 b l(t)^2 <
C_0+ C_1 l(t)^2,
$$
where $A_0',C_0, C_1$ are constants ($C_0$ and $C_1$ positive),
obtained by taking into account the hypothesis (iii). So, putting
$v(t)=\int_0^t u$ and relabelling $A_1$,
\begin{equation}\label{e3}
\dot v < C_0 + C_1 \cdot l^2 + C_2 \cdot v \quad \quad \quad
\hbox{for some constants} \; \; \; C_0, C_1, C_2 >0.
\end{equation}
Now, $v$ can be regarded as a subsolution of a differential
equation, and lemma \ref{lsubsol} will be applicable to the
solution $v_0$ of this equation with $v_0(0)=v(0)=0$ i.e. $v(t) <
v_0(t)$ and, taking into account (\ref{e3}):
$$
 \dot v < C_0 + C_1 \cdot l^2 + C_2 \cdot v_0 = \dot v_0
$$
on $(0,b)$. As $u = \dot v$,  to bound $\dot v_0$ would suffice.

Notice that $v_0$ can be written  explicitly as:
$$
v_0(t)= e^{C_2 t}\int_0^t e^{-C_2 \bar t}(C_0+C_1 l(\bar
t)^2)d\bar t
$$
so that, using that $l$ is nondecreasing,
\begin{equation}\label{enu}
\dot v_0 \leq C_0 + C_1 l^2 + C_2 b e^{C_2  b}(C_0 + C_1 l^2)= A+
B l^2 \quad \quad \quad \hbox{on} \; [0,b) \end{equation}
 for some constants $A, B >0$. But
recall that $\dot l = \sqrt{u} <\sqrt{\dot v_0}$, that is, $l$ can
be also regarded as a subsolution of a differential equation:
\begin{equation}\label{ele}
\dot l < \sqrt{A+B\cdot l^2}. \end{equation} So, $l$ is bounded by
the corresponding solution ($l(t)< \sqrt{A/B} \cdot \sinh(\sqrt{B}
\cdot t $ on $(0,b)$) and, thus, $u$ (regarded either as $\dot
l^2$ in (\ref{ele}) or as  $\dot v_0$ in (\ref{enu})) is bounded,
as required.

{\em Step 3: As $g$ is complete, $\dot \gamma$ must lie in a
compact subset}. The aim is to prove the extendability of $\dot
\gamma$ as an integral curve of the vector field $G$ on
$T(M\times\R)$ defined in the first step. As a first consequence
of the boundedness of $u$, the completeness of $g$ imply that
$\gamma$ must be convergent in $M$. Then, it is convenient to
distinguish two type of reasonings:

(3a) In the case that $M$ is finite dimensional, the convergence
of $\gamma$ at $b$,  the boundedness of $u=g(\dot
\gamma,\dot\gamma )$ and the local compactness of $TM$, are enough
to ensure that $\dot\gamma$ lies in a compact subset of $TM$, so
that Proposition \ref{pextend} is applicable to $G$.

(3b) In the infinite-dimensional case, the lack of local
compactness requires a more elaborated argument. First, the
Riemannian metric $g$ on $M$ induces naturally a Riemannian metric
$\tilde g$ on $TM$, the {\em Sasaki metric} \cite{Sa}. As proven
by Ebin \cite{Eb}, $\tilde g$ is complete whenever so is $g$. The
vector field $G$ can be written as a sum $G=G_0+G_1+G_2$ where
$G_0$ is the geodesic spray and, thus, a horizontal vector field,
$G_1$ is a vertical vector field such that, at each $v_{(p,t)}$,
depends only of the value of $R+\nabla^M V$ at $(p,t)$ and $G_2$
is also a vertical vector which, at each $v_{(p,t)}$, can be
identified with $E(v_{(p,t)})$. The convergence of $\gamma$ yields
a bound for $\tilde g(G_1,G_1)$ on $\dot \gamma$, the boundedness
of $u$ implies a bound for $\tilde g(G_0,G_0)$ and, then, the
boundedness of the operator $E$ implies the boundedness of $\tilde
g(G_2,G_2)$. So, $G$ is bounded on $\dot \gamma$, and Proposition
\ref{pcompleteboundedsubintervals} is applicable.

\end{proof}

\begin{remark}\label{remarkth}
(1) The result can be also sharpened, if one is only interested in
the forward or backward  completeness of the trajectories
({\em positive} or {\em negative completeness}), i.e.
the possibility to extend the solutions  to an upper or lower
unbounded
 interval type $[a,\infty)$ or $(-\infty,a]$.
From the proof is clear that, in order to obtain the extensibility
of the trajectories to $+\infty$ (resp. $-\infty$), one requires
only the  upper (resp. lower) uniform bound of $g(v,S(v))/g(v,v)$,
for  $v\in TM\setminus\{0\}$,\footnote{This can be rephrased as a
bound of the spectrum of $S$, see \cite[Th. 3.10]{Lang}.} as well
as the upper (resp. lower) bound of $\partial V/\partial t$,
instead of the bounds for the norm and absolute value imposed in
the hypotheses (i) and (iii) .

(2) As a trivial consequence of Theorem \ref{t1},   if $M$ is
compact then all the inextensible trajectories are complete,
 for
any $E, R, V$.

\end{remark}

\runinhead{Primary and positively complete functions.} The optimal
growth allowed  either for $-V$ or for $|\partial V/\partial t|$
can be sharpened, by using bounds in the spirit of the {\em
primary} ones, introduced for Theorem \ref{tsublinear}, which are
clearly related to the notion of {\em positive completeness}
introduced by Weinstein and Marsden \cite{WM}.

Recall that a smooth function $V_0: [0,\infty) \rightarrow \R$ is called {\em
positively complete} if it is non--increasing and satisfies
$$\int_0^{+\infty} \frac {ds}{\sqrt{e - V_0(s)}}\ = \infty,$$
for some (and then all) constant $e
> V_0(0)$ (hence $e > V_0(s)$ for all $s \in [0,+\infty)$) \cite{WM, AM}.
 Extending Weinstein-Marsden notions, we say that a
smooth time-dependent function $V: M\times \R \rightarrow \R$ is
bounded by a positively complete function along finite times if
there exists functions $V_0, C: [0,\infty)\rightarrow \R$, $V_0$
positively complete and $C>0$ such that:
$$
V(p,t) \geq C(t) V_0(|p|)) \quad \quad \forall (p,t)\in M\times \R
.
$$
The relation between these notions and those used in the last
subsection comes from the fact that {\em a smooth function $V_0$ is
positively complete if and only if $\sqrt{e-V_0}$ is well-defined
and primarily complete for some} $e>V_0(0)$. Now, from the proof of
Theorem \ref{t1}, one can  easily check:

\begin{svgraybox} Hypotheses (ii) and (iii) in Theorem \ref{t1} can be replaced by
the following more general one: there exists a primarily complete
function $\alpha$ and a positive one $C$ such that  $R$ is
primarily bounded along finite times
by  $C\cdot \alpha$ 
and $-V(p,t), |\partial V/\partial t|(p,t) < C(t)^2\alpha(|p|)^2$
for all $(p,t)\in M\times \R$.
\end{svgraybox} In particular, the  quadratic bounds  in
(iii) can be improved by requiring only bounds\footnote{These
improvements can be also extended to other contexts, as the
completeness of certain Finler metrics in \cite{DPS}.} by, say,
$C_0(t)+ C_2(t)|x|^2\log^2(1+|x|)$ and the  linear bound in
(ii) by $\tilde C_0(t)+ C_1(t)|x|\log(1+|x|)$ (as well as by other functions
pointed out in \cite[p. 233]{AM} or \cite[Remark 5(2)]{CRS}).
These bounds might be optimized further, combining them also with
better bounds for $E$.

\runinhead{The time-dependence of the potential $V$.} For a
non-autonomous potential, the role of the bounds of $\partial
V/\partial t$ becomes quite subtler. Notice that one can regard
$\nabla^MV$ as a time-dependent vector. Thus:

\begin{svgraybox}
If we assume in Theorem \ref{t1} that $\nabla^MV$ grows at most
linearly along finite times, no bound for $\partial V/\partial t$
is necessary. Nevertheless, such a hypothesis is independent of
 the one  stated in {\em (iii)}. In fact, in the autonomous case,
if $\nabla^MV$ grows at most linearly then $-V$ grows at most
quadratically, but, clearly, the converse does not hold.
\end{svgraybox}
Other alternative bounds  for $\partial V/\partial t$ in Theorem
\ref{t1} can be explored. For example, assuming by simplicity
$R=0$ in {\em (ii)}, the result of completeness still holds if we
replace {\em (iii)} by the following two conditions:  $V$ is lower
bounded at finite times ($V(p,t)\geq -C_0(t)$) and:
\begin{equation}\label{eff}|\partial V/\partial t|\leq C_1(t)(V(p,t)-C_0(t)) \quad
\quad \forall (p,t)\in M\times \R .\end{equation}  In fact,
(\ref{e1}) would yield now $d(u+2V)/dt < C(u+2V-B)$ for some
constants $C>0, B\in \R$ which depend on the domain $[0,b),
b<\infty$. So, $u+2V$ (and, then, $u$) would be bounded as a
subsolution, see \cite{CRSago2012} for details).

\begin{svgraybox}
These new  bounds (lower for $V$ plus (\ref{eff})) are independent
of those in {\em (iii)} because, when $V$ grows fast to infinity,
such a growth is allowed for $\partial V/\partial t$ too. So, to
find a general optimal bound for $\partial V/\partial t$ (say, extending all previous
with some nice geometric interpretation) remains as a natural question.
\end{svgraybox}

\subsubsection{Notes on the general Finsler case}
\label{s3.2}

\runinhead{Finsler metrics, second order equations and strong convexity.}  In order
to extend previous results to the Finslerian setting, notice that
 the Riemannian  metric $g$ in Theorem \ref{t1} not only allows to introduce distances and
 estimates on the
 growth of tensor fields, but also becomes essential to pose the
 second-order differential equation (\ref{e}). Thus, for the Finslerian extension,  not
 only higher differentiability for the Finsler metric $F$ will be required but also
 its {\em strong convexity}, to be explained here.

\begin{remark}
As pointed out in Section \ref{s1}, the existence of smooth
Finsler metrics introduce some restrictions in the infinite
dimensional case. In fact, notions such as
pseudo-gradients\footnote{ \label{footpseudo} According to Palais \cite[Defn.
4.1]{PalaisTop_LustSch66} (and taking into account Moore's
modification \cite[p. 50]{Moore}), a pseudo-gradient for a
function $V$ on an open  subset $U$ is a locally Lipschitz vector
field $X$ such that $\epsilon^2 F_p(X_p)^2\leq  \parallel
dV_p\parallel \leq \epsilon^{-2}
dV_p(X_p)$ for all $p\in U$.  
} were introduced to avoid those restrictions. Recall that the
smoothness of each pointwise norm $F_p$ is required only away from
0 and, thus,  it can be characterized as the smoothness of the
$F_p$-unit sphere as a submanifold of the corresponding vector
space $T_pM$. However, the smoothness of $F$ is not enough to
introduce connections, covariant derivatives, etc., which appears implicitly in \eqref{e}.
\end{remark}
The triangle inequality implies that, for each norm $F_p$, $p\in
M$, the closed unit ball $\bar B_p(0,1)$ is convex, i.e., it
contains any segment with endpoints in $\bar B_p(0,1)$. If the
triangle inequality holds strictly, then the unit sphere is
strictly convex, in the sense that each segment with endpoints in
$\bar B_p(0,1)$ must be entirely contained in the open unit ball $
B_p(0,1)$ except, at most, the endpoints. Nevertheless, even in
the smooth finite-dimensional case, the unit sphere may be
strictly convex but not  strongly convex in the following sense.

Recall that the {\em fundamental tensor}   of each norm $F_p$ is
the tensor field on $T_pM\setminus \{0\}$ defined as the Hessian
$h_{v_p}$ of $F_p^2$ at each $v_p\in T_pM\setminus\{0\}$. Such a
Hessian can be defined by using the affine connection of  $T_pM$
if $F_p$ is $C^2$. Now,   consider the  slit tangent bundle $
TM\setminus\{0\}$ and the tangent bundle $TM$, as well as the
natural projection $ \pi: TM\setminus\{0\} \rightarrow M$. This
maps induces a vector bundle $\pi^*(TM)$ with base
$TM\setminus\{0\}$, being its fiber at each $v\in
TM\setminus\{0\}$ isomorphic to $T_{\pi(v)}M$. Taking the
fundamental tensor for each $F_p, p\in M$, one defines naturally
the fundamental tensor field $h$ of $F$ as a tensor field on the
vector bundle $\pi^*(TM)$, and $F$ is called {\em strongly convex} when
$h$ becomes a smooth positive definite tensor.

Strong convexity may introduce a new restriction in the
infinite-dimensional case, but it is necessary for several
purposes, even in the case of $n$-manifolds (see \cite{JS} for
details):

\begin{itemize}\item To ensure that geodesics (defined
as extremals of the energy functional) are determined univocally
by its initial condition (starting point and velocity) at some
point. That is, otherwise {\em geodesics cannot be regarded as
solutions of a second order differential equation} nor their
velocities yield integral curves on a vector field on $TM$.

\item To ensure (at least in the finite-dimensional case) that the
natural Legendre transformation  $TM\rightarrow TM^* , v_p\mapsto
g_{v_p}(v_p,\cdot )$ (which generalizes the metric isomorphism of
inner spaces, see (\ref{ebemol}), but may not be linear) becomes a
diffeomorphism. Recall that this map is the fiber derivative
associated  to the Lagrangian $L=F^2/2$  (see \cite[Sect.
3.1]{Sh}, \cite[Sect. 3.6]{AMR}) and, then, the Lagrangian becomes
hyper-regular. In this case gradients can be defined, and
pseudo-gradients (see the footnote \ref{footpseudo}) are no longer necessary.

\item To define  natural  connections on the Finsler manifold.
\end{itemize}


\runinhead{Standard Finsler case.} Taking into account the
difficulties pointed out above for the general Finsler case, we
restrict now to {\em standard Finsler manifolds} i.e.,
$n$-manifolds endowed with a $C^\infty$-smooth and strongly convex
Finsler metric. This is the object of study of standard references on Finsler manifolds as, for example\footnote{However, standard Finsler metrics are usually allowed to be non-reversible, see Remark \ref{rfuturestudy}.},  \cite{BCS, Sh}. Some similarities
with the Riemannian case appear then:

\begin{itemize}
\item A covariant derivative for vector fields on curves exists. Thus,
the acceleration of these curves can be defined, extending so the
notion of $D\dot\gamma/dt$ in the Riemannian case \cite[pp.
121-124]{BCS}, \cite[Sect. 5.3]{Sh}.

\item  Non-constant geodesics can be defined as curves with 0
acceleration, they admit a variational characterization and they
also determine a (second order equation) vector field $G$ on the
slit tangent bundle
$TM\setminus\{0\}$ 
so that the integral  curves of $G$ are the curves of velocities
of  geodesics, \cite[Sect. 3.8, 5.3]{BCS}, \cite[Sect. 5.1]{Sh}.

\item The Finsler metric $F$ provides the fundamental
tensor
as well as a natural Sasaki type metric on the slit tangent bundle
that makes $TM\setminus\{0\}$  a Riemannian manifold \cite[p.
35]{BCS}.
\end{itemize}
Of course, important differences with the Riemannian case remain,
because  Chern/ Rundt connection  in Finslerian geometry (as well as Cartan, Hashiguchi or
Berwald  connections) becomes much subtler
than the natural Levi-Civita connection in the Riemannian case.

Bearing in mind these subtleties, one can try to give different
Finslerian extensions of Theorem \ref{t1}. Here, we will consider
just the most obvious one, and leave the possibility of obtaining 
more general results for further developments. To avoid working
with Finslerian machinery and work with one of the possible
connections, notice that, in the case $R=E=0$, formula (\ref{e})
is the Euler-Lagrange equation for the critical curves of the
action:
\begin{equation}\label{eaction} \int_{a}^b \left(\frac{1}{2}F(\dot
\gamma(t))^2-V(\gamma(t),t)\right) dt
\end{equation}
with fixed points $\gamma(a), \gamma(b)$. In Theorem \ref{t1}, $F$
is the  norm of the Riemannian metric  but, obviously, functional
(\ref{e}) makes sense for any Finsler metric and, under some  the
conditions as above, its Euler-Lagrange equation can be written as
in (\ref{e}). We say that $\gamma: I\subset \R
\rightarrow M$ is a {\em trajectory for the potential $V$} if its restriction to any compact subinterval $[a,b]$ of $I$ is a critical point of the action functional (\ref{eaction}).

\begin{proposition}\label{pFinslerExtension}
Let $(M,F)$ be a  standard Finsler manifold, and consider a $C^1$
time-dependent potential $V: M\times \R\rightarrow \R$ such that
$-V$ and $|\partial V/\partial t|$ grows at most quadratically for
finite times. Then, any inextensible trajectory $\gamma: I\subset \R
\rightarrow M$
for the potential $V$ is
complete.
\end{proposition}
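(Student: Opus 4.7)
The plan is to mirror the proof of Theorem \ref{t1} with $R\equiv 0$ and $E\equiv 0$, replacing the Riemannian identity \eqref{e1} by the Finslerian energy identity, and exploiting the finite-dimensional local compactness of $TM$ in the final extension step. Because $(M,F)$ is a standard Finsler manifold, $F$ is $C^{\infty}$ and strongly convex, so the Lagrangian $L = \tfrac{1}{2}F(\dot\gamma)^2 - V(\gamma,t)$ is hyper-regular on $TM\setminus\{0\}$ and its Euler--Lagrange equation is equivalent to an integral curve equation $\dot X = G(X)$ for a (time-dependent) second-order vector field $G$ on the slit tangent bundle. Contracting the Euler--Lagrange equation with $\dot\gamma$ (equivalently, differentiating the Hamiltonian $H = \tfrac{1}{2}F^2 + V$ along solutions) yields the energy identity
\begin{equation}\label{eFinslerEnergy}
\frac{d}{dt}\!\left(\tfrac{1}{2}F(\dot\gamma(t))^2 + V(\gamma(t),t)\right) \;=\; \frac{\partial V}{\partial t}(\gamma(t),t),
\end{equation}
which plays the role of \eqref{e1} in the present setting.

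Next, fix an inextensible trajectory $\gamma:[0,b)\rightarrow M$ with $b<\infty$, take $p_0=\gamma(0)$ as base point in \eqref{enotationdistancetop0}, and set $u(t)=F(\dot\gamma(t))^2$ together with $l(t)=\int_0^t\!\sqrt{u(s)}\,ds$. The (reversible) Finsler triangle inequality gives $|\gamma(t)|\le l(t)$, and $l$ is non-decreasing. On the compact time interval $[0,b]$ the time-dependent constants are bounded by absolute ones; integrating \eqref{eFinslerEnergy} and applying the quadratic bound on $|\partial V/\partial t|$ yields $u(t)/2 + V(\gamma(t),t) \le C_0 + C_2\int_0^t l(s)^2 ds \le C_0' + C_2' l(t)^2$, while the quadratic lower bound on $V$ gives $-V(\gamma(t),t)\le C_0 + C_2 l(t)^2$. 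Adding the two inequalities produces $u \le A + B l^2$ for constants $A,B>0$, and since $\dot l = \sqrt{u}$, the function $l$ is a subsolution of $\dot l < \sqrt{A+Bl^2}$. Lemma \ref{lsubsol} then bounds $l$ (by the explicit $\mathrm{sinh}$-type majorant) on $[0,b)$, and consequently $u$ is bounded there.

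With $F(\dot\gamma)$ bounded, $\gamma$ is uniformly continuous on $[0,b)$; assuming the (implicit) metric completeness of $(M,F)$, it extends continuously to some $p_b\in M$ as $t\to b^-$. The local compactness of $TM$ (finite-dimensional case) then guarantees that $\{(\gamma(t),\dot\gamma(t))\}$ accumulates at some $(p_b,v)\in TM$; applying Proposition \ref{pextend} to the second-order vector field $G$ through $(p_b,v)$ contradicts the assumed inextensibility of $\gamma$, completing the proof. The analogous reasoning for a left endpoint gives $I=\R$.

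The step I expect to be the main obstacle is the possibility that $v=0$ at the accumulation point. The spray $G$ is only smooth on $TM\setminus\{0\}$, and the Legendre transform associated with $F^2/2$ fails to be a diffeomorphism across the zero section; this is a genuinely Finslerian difficulty that is absent from Theorem \ref{t1}. To handle it one has to leave the purely spray viewpoint and argue from the Hamiltonian or variational formulation directly, exploiting that near $\dot\gamma=0$ the equation of motion is dominated by the well-defined term $-\nabla^M V$ (defined through the fundamental tensor and extending continuously to the zero section), which allows one to produce a $C^1$ continuation of $\gamma$ past $t=b$ even when the accumulating velocity vanishes.
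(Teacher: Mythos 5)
Your proposal follows the paper's proof essentially verbatim: reduce to a first-order system via the hyper-regularity of $L=F^2/2-V$ on the slit tangent bundle, use the energy identity $d(u+2V)/dt=2\,\partial V/\partial t$ to recover inequality \eqref{e2} with $A_1=0$, bound $u$ via Lemma \ref{lsubsol}, and conclude with Proposition \ref{pextend} (the metric completeness of $(M,F)$ is indeed implicitly assumed, as you correctly note). Your closing worry about the velocity accumulating on the zero section --- where the fundamental tensor, and hence the second-order vector field, fails to be smooth --- is a genuine subtlety that the paper's one-line proof silently glosses over; the cleanest repair is not the continuity of $\nabla^M V$ you invoke (the force term $g^{ij}(x,\dot x)\partial_i V$ is in general discontinuous at $\dot x=0$), but rather passing to the Hamiltonian $\tfrac12 F^{*2}+V$ on $T^*M$, whose associated vector field is locally Lipschitz across the zero section by the homogeneity of $F^{*2}$, so that existence and uniqueness of the continuation hold there as well.
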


\begin{proof}
Notice first that the problem can be reduced to study the integral
curves of a  vector field on $TM$, because, as the Finsler metric
is standard, the Lagrangian $L=(F^2/2)-V$ becomes regular (in
fact, hyper-regular), see for example  \cite[Th. 3.5.17,
3.8.3]{AMR}. Then, putting $u=F(\dot\gamma)^2$, one has
$d(u+2V)/dt=2\partial V/\partial t$ and formula (\ref{e2}) holds
(with $A_1=0$), so that the proof follows as in Theorem \ref{t1}.
\end{proof}

\begin{remark}\label{rfuturestudy}
A different direction in the possible  generalizations of Theorem
\ref{t1}, is to allow non-reversible Finsler metrics,
so that $F(v)\neq F(-v)$ in general. This leads us to consider {\em
generalized distances} (i.e., possibly non-symmetric ones) and
then, {\em forward and backward} geodesics and Cauchy completions,
as well as many other subtleties (see \cite{FHS} and references
therein). Nevertheless, the general background for completeness
would be maintained for this case. In fact, Proposition
\ref{pFinslerExtension} can be extended to the non-reversible
case. Namely,   regarding the hypotheses of completeness for $F$
in the sense of, say, {\em forward completeness}, and the
generalized distance $d_F$ to the base point in the ordering
$|p|=d_F(p_0,p)$ (so that the
 bound for the potential remains formally equal). Then, the  technique works
 also for the non-reversible case, and
 the conclusion of {\em forward} completeness
 still holds.
\end{remark}

\section{Completeness of pseudo-Riemannian geodesics}
\label{s4}

This section is divided into four parts. The fist  tries to
orientate the intuition about completeness on indefinite manifolds by
recalling some examples. Moreover,  the role of incompleteness
in relativistic singularity theorems is compared with the role of finite
 diameter
for  some Riemannian Myer's-type results. In the
second part, we recall some results on completeness for manifolds
with a high degree of symmetry. Here, the difference
between global symmetries (homogeneous, symmetric spaces) as in
Theorems \ref{tMa}, \ref{tRS}
 and local ones (constant curvature, local symmetry) in
 Theorems
\ref{tLa}, \ref{tCaKl} becomes apparent. The third part is focused on plane wave
type spacetimes, whose completeness yields a direct link with the
Riemannian results of trajectories under potentials (Theorem
\ref{twaves}). Previous results suggest some open questions stated
in the last part of the section.

In what follows, $(M,g)$ will be a $n$-manifold endowed with a
pseudo-Riemannian metric of index $\nu$, typically a Lorentzian
one (i.e., $\nu=1$ so that the  signature is $(-,+,\dots , +)$).
The name of {\em semi-Riemannian} manifold (instead of
pseudo-Riemannian) has been also spread, especially since
O'Neill's book \cite{ON}. This book is referred to here  for general
background on pseudo-Riemannian geometry,  the review \cite{CS}
for the specific problem of geodesic completeness, and the
book \cite{BEE} for
 related Lorentzian results.

\subsection{The pseudo-Riemannian and Lorentzian settings}
Let $(M,g)$ be a pseudo-Riemannian manifold, and $v\in TM$, $v\neq
0$. Extending the nomenclature in General Relativity, $v$ will be
called {\em timelike} (resp. lightlike, spacelike) if $g(v,v)<0$
(resp. $=0$, $>0$).

\runinhead{Abandoning the Riemannian intuition.} For a
pseudo-Riemannian manifold there is no any result analogous to the
Hopf-Rinow one and, for example, $M$ may be compact and
geodesically incomplete.

\begin{example}\label{exnohopfrinow} Consider the Lorentzian metric $g$ on $\R^2$
defined as $g=2dxdy +\tau(x)dy^2$, where $\tau$ is periodic of
period 1, $\tau(0)=0$ and $ \tau'(0)\neq 0$. A simple computation
shows that the line $x=0$ can be reparameterized as an incomplete
lightlike geodesic. So, the quotient torus $T=\R^2/\Z^2$ inherits
an incomplete Lorentzian metric (more refined properties on tori
can be found in \cite{Strans} and references therein).
\end{example}

The previous example also shows that a closed lightlike geodesic
may be non-periodic and, then, incomplete. Also as a difference
with the Riemannian case, a homogeneous Lorentzian manifold may be
incomplete.

\begin{example}\label{exnohomogcompl} Consider a half plane of Lorentz-Minkowski space in lightlike coordinates $u,v$
namely $(\R^+\times \R, g=2dudv)$. This space is trivially
incomplete, and it is homogeneous too, as both, the
$v$-translations and the maps $\Phi_\lambda:(u.v)\mapsto (\lambda
u, v/\lambda)$ (for any $\lambda>0$), are isometries. Recall also
that the quotient cylinder obtained from the orbits of the
isometry group $\{\Phi_2^m : m\in \Z\}$ is another example of
space with  a closed incomplete lightlike geodesic (namely, the
projection of $u\mapsto (u,0)$).

\end{example}
\runinhead{Singularity theorems.} Even though at the very
beginning  of General Relativity incompleteness was regarded as a
pathological property for a physical spacetime, the further
development of Relativity showed that incompleteness appears
commonly under physically realistic conditions. Well-known results in this
direction were obtained by Raychaudhuri  \cite{Ra}, Penrose
\cite{Pe}, Hawking \cite{Ha}, Gannon \cite{Ga} or, more recently,
Galloway and Senovilla \cite{GS}, amongst others (see
the review \cite{Seno} for general background). We emphasize that the claimed
incompleteness here occurs only for geodesics of timelike or
lightlike type\footnote{Explicit examples by Kundt \cite{Kundt},
Geroch \cite[p. 531]{Ge} and Beem \cite{Beem} showed the full
logical independence among spacelike, timelike and lightlike
geodesic completeness.}. Even though it is not totally clear to
what extent such incomplete geodesics would represent a physical
singularity (as well as the meaning of the latter, see the
classical discussion \cite{Ge}), the moral in Relativity is that
the knowledge of  the possible completeness or incompleteness of
the underlying Lorentzian manifold becomes an essential property
of the spacetime.

As pointed out in \cite{SaBilbao}, perhaps the simplest
singularity theorem for researchers interested in connections with
Riemannian Geometry is the following one by Hawking, which can be
regarded as a support for the physical existence of a {\em Big
Bang}.

\begin{theorem}\label{thawking}
Let $(M,g)$ be a spacetime  satisfying the following conditions:
\begin{enumerate} 
\item $(M,g)$ is globally hyperbolic, \item there exists some
spacelike Cauchy hypersurface $S$ with an infimum $C>0$ of its
expansion, that is, such that its mean curvature vector
$\vec{H}=H\vec{n}$, where $\vec{n}$ is the future-directed unit
normal, satisfies
 $H\geq C>0$,
 \item the  timelike convergence condition holds: ${\rm Ric}(v,v)\geq 0$ for any timelike vector $v$.
\end{enumerate}
Then, any past-directed timelike curve starting at $S$ has length
at most $1/C$.
\end{theorem}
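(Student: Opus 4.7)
The strategy is the classical contradiction argument based on focal-point analysis via Raychaudhuri's equation. Suppose there is a past-directed timelike curve $\gamma$ starting at $p_0\in S$ with Lorentzian arc length $L(\gamma)>1/C$, and let $p$ denote its endpoint; then the Lorentzian distance satisfies $d(S,p)>1/C$. Hypothesis (1) of global hyperbolicity (plus the fact that $S$ is a Cauchy hypersurface) ensures that $d(S,\,\cdot\,)$ is finite and attained, so there exists a past-directed unit-speed timelike geodesic $\sigma\colon[0,L]\to M$ with $\sigma(0)\in S$, $\sigma(L)=p$ and $L=d(S,p)>1/C$. The first variation formula for a timelike curve with one endpoint free on the spacelike hypersurface $S$ forces $\dot\sigma(0)\perp T_{\sigma(0)}S$, so $\sigma$ is a member of the congruence of past-directed $S$-normal timelike geodesics.

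The goal is then to show that this congruence must develop a focal point to $S$ at some parameter $\tau_{*}\leq 1/C<L$, contradicting the length-maximising property of $\sigma$. To this end, one follows the expansion $\theta(\tau)$ of the congruence along $\sigma$. Since the congruence is hypersurface-orthogonal it is irrotational, and Raychaudhuri's equation reduces to
\begin{equation*}
\frac{d\theta}{d\tau}\;=\;-\frac{\theta^{2}}{n-1}\;-\;\sigma_{ab}\sigma^{ab}\;-\;\mathrm{Ric}(\dot\sigma,\dot\sigma).
\end{equation*}
The shear-squared term is nonpositive, and by hypothesis (3) so is the Ricci term along the timelike $\sigma$, yielding the Riccati differential inequality
\begin{equation*}
\dot\theta\;\leq\;-\frac{\theta^{2}}{n-1}.
\end{equation*}
At $\tau=0$, a direct computation with the future-directed unit normal $\vec{n}$ gives $\theta(0)=-\mathrm{div}\,\vec{n}\,|_{S}=-(n-1)H(\sigma(0))\leq -(n-1)C$ by hypothesis (2).

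A comparison argument in the spirit of Lemma~\ref{lsubsol}, applied with the model ODE $\dot u=-u^{2}/(n-1)$, $u(0)=-(n-1)C$, whose explicit solution $u(\tau)=(n-1)/(\tau-1/C)$ blows down to $-\infty$ precisely at $\tau=1/C$, gives $\theta(\tau)\to -\infty$ at some $\tau_{*}\leq 1/C$. Via the standard identification of $\theta$ with the logarithmic derivative of the $(n-1)$-volume element of the Jacobi-tensor of $S$-normal variations, this divergence detects a focal point of $S$ along $\sigma$ at parameter $\tau_{*}\leq 1/C<L$.

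Finally one invokes the classical Lorentzian fact that, past the first focal point to a spacelike hypersurface, an $S$-normal timelike geodesic ceases to be length-maximising from $S$: a second-variation/index-form argument produces a proper variation of $\sigma|_{[0,L]}$ through $S$-normal timelike curves of strictly greater length, contradicting the choice of $\sigma$. The technical core of the proof is clearly the Riccati comparison for $\theta$; the main delicate points, which I would merely quote from \cite{ON,BEE}, are the two genuinely Lorentzian ingredients --- existence of a length-maximising $S$-normal geodesic under global hyperbolicity, and non-maximality past the first focal point --- since both rely on the peculiarities of Lorentzian distance (upper semicontinuity rather than lower) and have no direct Riemannian counterpart.
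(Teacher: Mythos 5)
Your proof is correct and follows exactly the focal-point/Raychaudhuri comparison argument the paper has in mind: the paper does not write out a proof of Theorem~\ref{thawking}, but only observes that it is ``isomorphic'' to the Riemannian Myers-type Theorem~\ref{tRHawking} (with global hyperbolicity playing the role of completeness) and defers the details to \cite{JSlibro}. Your write-up supplies precisely those details, with the correct Riccati comparison and blow-up time $1/C$, and rightly isolates the two genuinely Lorentzian ingredients (existence of an $S$-normal maximizer, and loss of maximality past the first focal point) as the steps to be quoted from \cite{ON,BEE}.
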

The reason is that the proof of this theorem can be regarded as
isomorphic to the proof of the following purely Riemannian result:
\begin{theorem}\label{tRHawking}
Let $(M,g)$ be a  Riemannian manifold satisfying:
\begin{enumerate}
\item $g$ is complete, \item there exists some embedded
hypersurface $S$ which separates $M$ as a disjoint union
$M=M_-\cup S\cup M_+$, with an infimum $C>0$ of its expansion
towards $M_+$, that is, such that its mean curvature vector
$\vec{H}=H\vec{n}$, where $\vec{n}$ is the  unit normal which
points out $M_-$, satisfies
 $H\geq C>0$,
 \item Ric$(v,v)\geq 0$ for every $v$.
\end{enumerate}
Then, ${\rm dist}(p,S)\leq 1/C$ for every $p\in M_-$.
\end{theorem}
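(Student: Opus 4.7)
The plan is to run, in the Riemannian setting, the focal-point / Riccati comparison argument that underlies the Lorentzian Theorem \ref{thawking}; the whole point of stating Theorem \ref{tRHawking} is that the two proofs are formally isomorphic, with completeness replacing global hyperbolicity and the Riemannian distance $\mathrm{dist}(p,S)$ replacing the Lorentzian length.

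Fix $p\in M_-$ arbitrarily. Since $S$ separates $M$ into $M_-\cup S\cup M_+$ it is closed in $M$, so by completeness (hypothesis (1)) the Hopf--Rinow theorem produces a unit-speed minimizing geodesic $\gamma\colon[0,L]\to M$ with $\gamma(0)\in S$, $\gamma(L)=p$ and $L=\mathrm{dist}(p,S)$. A standard first-variation computation forces $\gamma$ to meet $S$ orthogonally at $\gamma(0)$, and the fact that $p\in M_-$ forces $\dot\gamma(0)=-\vec{n}(\gamma(0))$; that is, the initial velocity points into $M_-$.

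Next I would analyze $\gamma$ via the Riccati equation for the Weingarten operator $A(t)$ of the hypersurfaces $S_t$ parallel to $S$ at distance $t$ into $M_-$ (well defined on the maximal subinterval of $[0,L]$ free of focal points of $S$). This Riccati equation reads
\[\dot A(t)+A(t)^2+R_{\dot\gamma(t)}=0,\]
where $R_{\dot\gamma(t)}\colon v\mapsto R(v,\dot\gamma(t))\dot\gamma(t)$ is the tidal operator. Taking the trace, applying the Cauchy--Schwarz bound $\mathrm{tr}(A^2)\ge(\mathrm{tr}\,A)^2/(n-1)$, and discarding the nonnegative term $\mathrm{Ric}(\dot\gamma,\dot\gamma)$ via hypothesis (3), yields the scalar Riccati inequality
\[\dot\theta(t)\ \le\ -\frac{\theta(t)^2}{n-1},\qquad \theta(t):=\mathrm{tr}\,A(t).\]
With the sign conventions fixed by $\dot\gamma(0)=-\vec{n}(\gamma(0))$, hypothesis (2) (taking $H=\mathrm{tr}\,A/(n-1)$ as the mean curvature) gives the initial datum $\theta(0)\le -(n-1)C$. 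Comparing this with the explicit solution of the ODE $\dot\varphi=-\varphi^2/(n-1)$, $\varphi(0)=-(n-1)C$, namely $\varphi(t)=(n-1)C/(Ct-1)$, and invoking the subsolution comparison of lemma \ref{lsubsol}, one obtains $\theta(t)\to -\infty$ at some time $t_\ast\le 1/C$.

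The blow-up of $\theta$ at $t_\ast$ is precisely a focal point of $S$ along $\gamma$. By the classical second-variation / index-form argument, a geodesic normal to $S$ cannot minimize the distance to $S$ past its first focal point, so if $L>1/C$ then $\gamma$ would fail to minimize on $[0,L]$, contradicting its choice. Hence $L\le 1/C$, and since $p\in M_-$ was arbitrary the theorem follows. The main obstacle is conceptually minor but is the source of all potential errors: fixing the sign conventions relating $\vec{n}$, $\vec{H}$, $A$ and $\theta$ so that hypothesis (2) really produces $\theta(0)\le -(n-1)C$ with the correct sign in the Riccati inequality above; once this bookkeeping is in place, the rest is an elementary scalar comparison plus the classical focal-point theorem, exactly in parallel with the proofs of Bonnet--Myers and of Theorem \ref{thawking}.
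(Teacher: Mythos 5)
Your proposal is correct and follows essentially the same route the paper intends: the paper only sketches this theorem, asserting it follows from ``standard techniques on focal points and Myers' theorem'' (with details deferred to \cite{JSlibro}), and your argument --- minimizing normal geodesic via Hopf--Rinow, traced Riccati comparison using Ric$\,\geq 0$ and the mean-curvature bound, blow-up forcing a focal point before parameter $1/C$, and non-minimization past the first focal point --- is precisely that standard argument, carried out correctly. The only point deserving care is the sign bookkeeping relating $\vec{n}$, $\vec{H}$ and $\theta(0)\leq -(n-1)C$, which you explicitly flag and which works out as you state.
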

In fact, this last theorem can be proven by using standard
techniques on focal points and Myers' theorem. Such techniques can
be extended to the Lorentzian setting by realizing that the roles
of each one of the three hypotheses in Theorem \ref{thawking} is
isomorphic in the proof to the corresponding hypothesis in Theorem
\ref{tRHawking} (in particular, the role of Riemannian
completeness is played by global hyperbolicity), see
\cite{JSlibro} for full details. The techniques of singularity
theorems, however, become much more refined, because of the
weakening of causality assumptions,  the appearance of genuinely
Lorentzian elements such as trapped surfaces and other subtleties, see for example
\cite{HP} or, more recently,  \cite{GS}.

\subsection{Completeness under symmetries}

After previous considerations, it is clear that some strong
assumptions will be required in order to prove geodesic
completeness. We will focus on some types of symmetries.

\runinhead{Killing and conformal fields.} The simple Examples
\ref{exnohopfrinow}, \ref{exnohomogcompl} of non-complete compact
or homogeneous Lorentzian manifolds, make apparent the importance
of the following theorem by Marsden  \cite{MaHomog} (see also
\cite[4.2.22]{AM}):

\begin{theorem} \label{tMa} {\em \cite{MaHomog}}
Any compact homogeneous pseudo-Riemannian manifold is geodesically
complete. 
\end{theorem}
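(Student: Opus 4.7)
The plan is to adapt Marsden's argument in \cite{MaHomog}, combining compactness of $M$, the transitive isometry action $G$, and the scaling symmetry of the geodesic equation. For $v \in TM$, write $\beta(v) \in (0,\infty]$ for the maximal forward existence time of $\gamma_v$. Two symmetries will be essential: (a) affine scaling, $\gamma_{\lambda v}(t) = \gamma_v(\lambda t)$, giving $\beta(\lambda v) = \beta(v)/\lambda$ for $\lambda > 0$; and (b) $G$-invariance of the spray, giving $\beta(d\phi(v)) = \beta(v)$ for every isometry $\phi \in G$. The aim is to prove $\beta \equiv \infty$.

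First I would derive a uniform short-time lower bound on $\beta$. Compactness of $M$, and hence of the zero section $Z_M \subset TM$, together with a finite cover by flow-boxes of the geodesic spray yields $\tau > 0$ and an open neighborhood $U \supset Z_M$ with $\beta > \tau$ on $U$. Fix a basepoint $p_0 \in M$ and an auxiliary Riemannian metric $h$ on $M$. Using transitivity together with finitely many continuous local sections of the principal bundle $G \to M = G/H$, one can select a compact subset $C \subset G$ such that $C \cdot p_0 = M$. Then
\[
U_0 \;:=\; \bigcap_{\phi \in C} d\phi^{-1}\bigl(U \cap T_{\phi(p_0)}M\bigr) \;\subset\; T_{p_0}M
\]
is an open neighborhood of $0$ that contains an $h$-ball of some radius $r > 0$, by compactness of $C$ and continuity of $\phi \mapsto d\phi^{-1}$. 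Combining with (a) and (b) this upgrades to the scale-invariant bound
\[
\beta(v) \;\geq\; \frac{r\tau}{\|v\|_h} \qquad \forall\, v \in TM \setminus \{0\}.
\]

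Next I would argue by contradiction. Suppose $\gamma : [0,b) \to M$ is inextensible with $b < \infty$. Along a subsequence $t_n \nearrow b$, compactness of $M$ gives $\gamma(t_n) \to p_* \in M$, and inextensibility together with local existence forces $\dot\gamma(t_n)$ to escape every compact subset of $TM$, so in particular $\|\dot\gamma(t_n)\|_h \to \infty$. Transport the velocities back to $p_0$ via $\phi_n \in C$ with $\phi_n(\gamma(t_n)) = p_0$ and extract $\phi_n \to \phi_* \in C$. The transported vectors $u_n := d\phi_n(\dot\gamma(t_n)) \in T_{p_0}M$ then satisfy $\beta(u_n) = b - t_n \to 0$, the first integral $g_{p_0}(u_n,u_n) = g(\dot\gamma(0),\dot\gamma(0)) =: c$, and $\|u_n\|_h \to \infty$ (since $d\phi_n$ lies in a compact family of linear isomorphisms, so is uniformly bilipschitz with $\dot\gamma(t_n)$). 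Hence $\{u_n\}$ is a divergent sequence in the fixed pseudo-sphere $\{u \in T_{p_0}M : g_{p_0}(u,u) = c\}$.

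The main obstacle is closing the contradiction: in indefinite signature this pseudo-sphere is \emph{not} compact, so unboundedness of $\|u_n\|_h$ is not yet in conflict with staying on it, in sharp contrast with the Riemannian case (where its compactness would finish the proof immediately). Overcoming this requires using the compact homogeneous structure more deeply — for instance, by showing that once the choice of $\phi_n$ is normalized, the $u_n$ remain within a single orbit of the isotropy $H = G_{p_0}$ acting on the pseudo-sphere, and then exploiting the specific way such orbits sit inside a compact homogeneous pseudo-Riemannian model. This last step, where the full force of compact homogeneity enters beyond mere topology and continuity, is the heart of Marsden's insight and I expect it to be the most delicate part of the proof.
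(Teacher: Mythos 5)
You set the stage correctly --- the scaling identity $\gamma_{\lambda v}(t)=\gamma_v(\lambda t)$, the uniform short-time existence bound, and the transport of velocities back to a fixed fibre are all sound --- but you stop exactly where the proof actually happens, and the single conserved quantity $g(\dot\gamma,\dot\gamma)$ you retain cannot close the argument: as you yourself observe, the pseudo-sphere $\{u\in T_{p_0}M : g_{p_0}(u,u)=c\}$ is noncompact in indefinite signature, so nothing contradicts $\|u_n\|_h\to\infty$. The missing idea is not a finer analysis of isotropy orbits but the full momentum map of the isometry group. For \emph{every} Killing field $K$ the quantity $g(\dot\gamma,K)$ is constant along a geodesic; packaging all of these gives $P:TM\to\mathfrak{g}^*$, $P(v)\xi=g(v,\xi_M)$, where $\mathfrak{g}$ is the Lie algebra of the isometry group and $\xi_M$ the infinitesimal generator of $\xi$. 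Each level set $S_\alpha=P^{-1}(\alpha)$ is invariant under the geodesic flow, and homogeneity enters precisely here: the Killing fields span each tangent space, so by nondegeneracy of $g_p$ the linear map $P|_{T_pM}$ is injective and $S_\alpha$ meets each fibre in at most one point. Since $M$ is compact, the operator norms of the inverses $(P|_{T_pM})^{-1}$ (on their images) are uniformly bounded, so $S_\alpha$ is a closed, fibrewise-bounded subset of $TM$ over a compact base, hence compact. The velocity curve of any geodesic lies in one such compact flow-invariant set, and Proposition \ref{pextend} applied to the geodesic spray concludes.

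In your notation, this replaces the single quadratic constraint $g_{p_0}(u_n,u_n)=c$ by $\dim\mathfrak{g}\ (\geq n)$ \emph{linear} constraints $g(u_n,\xi_M)=\mathrm{const}$ that pin $u_n$ to a single point of $T_{\gamma(t_n)}M$ lying in a fixed compact subset of $TM$, which is exactly the contradiction with $\|u_n\|_h\to\infty$ that you were missing. With that ingredient in hand, your elaborate first half (flow-boxes, compact sections of $G\to M$, the bound $\beta(v)\geq r\tau/\|v\|_h$) becomes unnecessary: compact invariant sets plus Proposition \ref{pextend} give completeness directly.
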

Marsden's proof is carried out by proving  that $TM$ can be
written as the union of compact subsets $S_\alpha$, each one
invariant by the geodesic flow (and, so, Proposition \ref{pextend}
yields directly the result). In fact, if $\mathfrak{g}^*$ is the
dual of the Lie algebra of the isometry group, and $P:
TM\rightarrow \mathfrak{g}^*$ is the momentum map (i.e.,
$P(v)\xi=g(v,\xi_M)$, where $\xi_M$ is the infinitesimal generator
of $\xi\in \mathfrak{g}$), then $S_\alpha=P^{-1}(\alpha)$, for
each $\alpha\in \mathfrak{g}^*$.

As proven by Romero and the author \cite{RSpams, RSgeoded}, this
result can be extended in two directions. Firstly,  it is not
necessary, in order to ensure the completeness of each geodesic
$\gamma$, that its velocity $\dot\gamma$  remains in a compact
subset of $TM$. In the spirit of Proposition
\ref{pcompleteboundedsubintervals},  it is enough if it remains in
a compact subset when its domain is restricted to bounded
intervals. From such an observation, Theorem \ref{tMa} can be
extended to metrics conformal to Marsden's. Secondly, a
homogeneous manifold is full of Killing vector fields but if,
say, a compact Lorentzian manifold admitted just one {\em
timelike} Killing vector field\footnote{This case is interesting
also for the classification of  flat compact Lorentzian manifolds,
which are called then {\em standard}, see \cite{Ka}.} $K$, this
would be enough. Indeed, as $g(\dot\gamma,K)$ is a constant for
any geodesic $\gamma$, this (plus the constancy of $g(\dot
\gamma,\dot\gamma)$) is sufficient to ensure that $\dot\gamma$
lies in a compact subset. So, from these ideas:

\begin{theorem} {\em \cite{RSpams, RSgeoded}}
A compact pseudo-Riemannian manifold $(M,g)$ of index $\nu$ is
geodesically complete if one of the following properties hold:

\bit \item $(M,g)$ is (globally) conformal to a homogeneous one,
or \item $(M,g)$ admits $\nu$ timelike conformal vector fields which are
pointwise independent. \eit \label{tRS}
\end{theorem}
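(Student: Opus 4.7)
The plan is to follow the strategy of Theorem \ref{tMa} but using, in the spirit of Proposition \ref{pcompleteboundedsubintervals}, the weaker requirement that the velocity of a geodesic lies in a compact subset of $TM$ only when restricted to bounded time intervals. I fix an auxiliary Riemannian metric $g_R$ on the compact $M$; for an inextensible $g$-geodesic $\gamma : [0, b) \to M$ with $b < \infty$, the goal is to bound $g_R(\dot\gamma, \dot\gamma)$ uniformly on $[0,b)$. Then $\dot\gamma([0,b))$ is relatively compact in $TM$, and Proposition \ref{pextend} applied to the geodesic spray extends $\gamma$ past $b$, contradicting inextensibility.

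The key identity, for a conformal Killing field $K$ with $\mathcal{L}_K g = 2\rho g$, is
\[
\frac{d}{dt} g(\dot\gamma, K) \;=\; g(\dot\gamma, \nabla_{\dot\gamma} K) \;=\; \tfrac{1}{2}(\mathcal{L}_K g)(\dot\gamma, \dot\gamma) \;=\; \rho(\gamma(t)) \cdot e,
\]
where $e = g(\dot\gamma, \dot\gamma)$ is constant along $\gamma$. Since $M$ is compact and $\rho$ continuous, the right-hand side is uniformly bounded, so $t \mapsto g(\dot\gamma(t), K)$ is Lipschitz on $[0,b)$, and in particular bounded.

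For the second hypothesis, let $K_1, \ldots, K_\nu$ be the given pointwise-independent timelike conformal fields, spanning a nondegenerate $\nu$-plane $V_p \subset T_pM$ at each $p$ (with positive-definite complement $V_p^\perp$). The Gram matrix $(g(K_i, K_j))_p$ is then invertible, so the bounded quantities $g(\dot\gamma, K_i)$ determine the $V_p$-component $v_V$ of $\dot\gamma$ continuously, and by compactness of $M$ yield a uniform $g_R$-bound on $v_V$. The identity $g(v_\perp, v_\perp) = e - g(v_V, v_V)$ is thus bounded, and positive-definiteness of $g|_{V^\perp}$ together with compactness of $M$ gives a $g_R$-bound on $v_\perp$, hence on $\dot\gamma$. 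For the first hypothesis, write $g = e^{2\sigma} g_0$ with $g_0$ homogeneous; a direct computation gives $\mathcal{L}_K g = 2\, K(\sigma)\, g$ for every Killing field $K$ of $g_0$. Homogeneity supplies finitely many $K_1, \ldots, K_N$ whose values span $T_pM$ at every $p$, so $v \mapsto (g(v, K_i(p)))_i$ is injective on each $T_pM$, and the bounded quantities $g(\dot\gamma, K_i)$ determine $\dot\gamma$ continuously with a uniform $g_R$-bound.

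The main obstacle, contrasting with the Riemannian setting, is that the conserved quantity $e = g(\dot\gamma, \dot\gamma)$ does not itself control $\dot\gamma$; one must combine the (only approximately) conserved quantities $g(\dot\gamma, K_i)$ with the splitting of $T_pM$ dictated by the $K_i$, and extract a Riemannian bound from compactness of $M$. A secondary subtlety is that $g(\dot\gamma, K)$ ceases to be a strict integral of motion for merely conformal $K$, and is bounded on $[0,b)$ only with a bound growing linearly in $b$ --- which is precisely why the passage from ``globally invariant compact set'' to ``compact on bounded intervals'' (Proposition \ref{pcompleteboundedsubintervals}) is essential.
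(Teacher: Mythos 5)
Your overall strategy is exactly the one the paper indicates (it only sketches the argument and defers to \cite{RSpams, RSgeoded}): replace the globally invariant compact sets of Marsden's Theorem~\ref{tMa} by compactness of the velocity on bounded parameter intervals, and obtain this from the quantities $g(\dot\gamma,K_i)$, which for conformal fields satisfy $\tfrac{d}{dt}g(\dot\gamma,K)=\rho\,g(\dot\gamma,\dot\gamma)$ and are therefore bounded on $[0,b)$ when $b<\infty$. Your treatment of the conformally homogeneous case is correct and complete: finitely many Killing fields of $g_0$ spanning every tangent space make $v\mapsto (g(v,K_i(p)))_i$ injective, and compactness of $M$ converts the bounds on $g(\dot\gamma,K_i)$ into a uniform $g_R$-bound on $\dot\gamma$, with no signature considerations needed.

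The gap is in the second case, at the assertion that $K_1(p),\dots,K_\nu(p)$ span ``a nondegenerate $\nu$-plane $V_p$ with positive-definite complement''. For $\nu=1$ this is true (a timelike line is negative definite, its orthogonal complement positive definite), and your argument then closes --- this is the case the paper actually sketches. For $\nu\ge 2$ it is false as pointwise linear algebra: in $\R^3$ with the quadratic form $-x^2-y^2+z^2$ (index $2$), the vectors $v_1=(1,0,0)$ and $v_2=(1,1,1)$ are timelike and independent, yet $v_2-v_1=(0,1,1)$ is null and orthogonal to both, so their span is degenerate. Consequently the constraint set $\{v:\ g(v,v)=e,\ g(v,v_1)=c_1,\ g(v,v_2)=c_2\}$ can be noncompact --- it contains the unbounded family $s\,(0,1,1)$, on which all three quantities vanish --- so boundedness of the $g(\dot\gamma,K_i)$ together with the constancy of $e$ does not confine $\dot\gamma$. (Replacing $v_2$ by $(1,1,1.4)$ gives a nondegenerate but Lorentzian span, whose orthogonal complement is indefinite once $\dim M\ge 4$; so the obstruction is not merely degeneracy.) What your argument actually requires is that the Gram matrix $(g(K_i,K_j))$ be negative definite at every point, i.e.\ that the $K_i$ span a timelike distribution; ``timelike and pointwise independent'' does not imply this when $\nu\ge 2$, and bridging that implication (or strengthening the hypothesis accordingly, as in \cite{RSgeoded}) is precisely the step your proof is missing.
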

The technique  also admits extensions to non-compact manifolds,
see \cite{RSgeoded}, \cite{SaGRG}; for applications to
classification of spaceforms, see \cite{Ka}. Further results on
locally homogeneous 3-spaces (involving also the classification of
these spaces) can be found in \cite{BlMe},
\cite{Cal} and \cite{DZ}.

 \runinhead{Locally symmetric and constant curvature
manifolds.} As a difference with homogeneous spaces, it is easy to
check that any {\em pseudo-Riemannian symmetric space is
geodesically complete} (see for example \cite[Lemma 8.20]{ON}).
Nevertheless, even for locally symmetric spaces and, in
particular, constant curvature ones, the problem is not as trivial
as it may seem. We quote two results which will be relevant in
order to state some open questions below. The first one is due to
Lafuente:
\begin{theorem} {\em \cite{La}}
For a locally symmetric Lorentzian manifold, the three types of
causal completeness (timelike, lightlike and spacelike) are
equivalent. \label{tLa}
\end{theorem}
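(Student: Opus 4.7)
The key feature to exploit is local symmetry, $\nabla R\equiv 0$: along any geodesic $\gamma$, parallel transport preserves the curvature tensor, so in a parallel orthonormal frame $\{E_i(t)\}$ along $\gamma$ the Jacobi equation becomes a linear second-order ODE $\ddot J = -A J$ whose constant coefficient matrix $A$ depends only on $(R_p,\dot\gamma(0))$, where $p=\gamma(0)$. In particular, the qualitative long-term behaviour of the geodesic flow along $\gamma$ is encoded in algebraic data at a single point of $M$.

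The plan is to argue contrapositively: if there exists an inextensible geodesic $\gamma:[0,b)\to M$ of some causal type with $b<\infty$, then inextensible geodesics of the other two causal types exist as well. Set $v_0=\dot\gamma(0)\in T_pM$ and, for $v\in T_pM$, let $b(v)\in(0,+\infty]$ be the forward existence time of $\gamma_v$. By Proposition \ref{pextend}, $b(v)<\infty$ exactly when $\gamma_v$ fails to accumulate in $M$ as $t\to b(v)^-$. The goal is to exhibit, starting from $v_0$, initial velocities $v_1$ (lightlike) and $v_2$ (spacelike) with $b(v_i)<\infty$, together with analogous statements for backward completeness.

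The main tool is the stabilizer $K_p\subset O(T_pM,g_p)$ of $R_p$. Since $A$ is determined by $(R_p,v)$, the substitution $v\mapsto \phi(v)$ with $\phi\in K_p$ simply relabels the frame, so $b(\phi(v))=b(v)$; in other words, $b$ is constant on $K_p$-orbits. More generally, local symmetry gives transvections along any geodesic, which identify geodesics at different points of $M$ that share the same algebraic initial data $(R,v)$ modulo $K$. The plan is to combine this invariance with a continuous deformation of $v$ across the light cone, using the constant-coefficient form of the Jacobi equation from the first step to control how $b$ can vary: the spectrum of $A=A(v)$ depends analytically on $v$, and it governs whether $\gamma_v$ remains in a compact region up to any prescribed time.

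The main obstacle is precisely the deformation step. A priori, $b$ is only lower semicontinuous, and one could imagine incompleteness confined to a single causal class (as indeed happens in the non-symmetric Lorentzian examples of Kundt, Geroch and Beem mentioned in the excerpt). The argument must therefore leverage the rigidity afforded by $\nabla R=0$: the explicit constant-coefficient form of the Jacobi equation, together with $K_p$-invariance, should force the set $\{v\in T_pM : b(v)=+\infty\}$ to be stable under the joint action of orbit equivalence and continuous deformation, hence either to equal $T_pM\setminus\{0\}$ or to meet none of the three causal cones. Turning this heuristic into a proof is the crux: it requires a careful spectral analysis of $A(v)$ as $v$ crosses the light cone, and is where Lafuente's technique bears its full weight.
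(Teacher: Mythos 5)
The paper does not prove Theorem~\ref{tLa}; it only quotes it from \cite{La}, so there is no internal proof to compare against. Judged on its own terms, your proposal is not a proof but a programme, and you say so yourself (``turning this heuristic into a proof is the crux''). More seriously, the mechanism you propose to carry out that programme fails at two concrete points. First, the constant-coefficient Jacobi equation does \emph{not} encode the existence time of a geodesic: completeness is a global property of $(M,g)$, not a function of the algebraic data $(R_p,\dot\gamma(0))$. Take $M=\R^{1,n-1}\setminus\{q\}$, which is flat, hence locally symmetric; the Jacobi equation is $\ddot J=0$ along every geodesic, yet from a point $p\neq q$ the geodesic aimed at $q$ is inextensible in finite time while all nearby ones are complete. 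Second, and for the same reason, your claimed $K_p$-invariance $b(\phi(v))=b(v)$ is false: in that example $K_p$ is the full Lorentz group (the stabilizer of $R_p=0$), which moves the incomplete direction onto complete ones. An element of $K_p$ gives, via Cartan--Ambrose--Hicks, a \emph{local} isometry near $p$, but nothing forces it to extend along the whole geodesic, so it cannot transport completeness. The same example kills the proposed dichotomy that $\{v:b(v)=\infty\}$ is either all of $T_pM\setminus\{0\}$ or misses every causal cone; the theorem is about the global statements ``all timelike geodesics are complete,'' etc., not about direction-by-direction completeness at one point, and the proposal conflates the two.

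What a correct argument must supply is precisely the global input your sketch lacks. Lafuente's proof exploits that a locally symmetric space develops locally into its simply connected symmetric model, and that an incomplete geodesic of one causal character can be placed inside a totally geodesic (flat or low-dimensional symmetric) submanifold generated by parallel vector fields along it; within that submanifold one produces incomplete geodesics of the other causal characters. The parallelism of $R$ enters by guaranteeing that such parallel totally geodesic ``slabs'' exist along the incomplete geodesic and that the comparison with the model is rigid --- not by making $b(v)$ an algebraic invariant of $(R_p,v)$. If you want to salvage your approach, replace the pointwise spectral analysis of $A(v)$ by this construction of totally geodesic surfaces along the given incomplete geodesic; the deformation across the light cone then happens inside a fixed such surface, where completeness of the different causal directions can genuinely be compared.
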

The second one was proven by Carri\'ere \cite{Ca} in the  flat
case and extended by Klinger \cite{Kl} for  manifolds of any
constant curvature.

\begin{theorem} {\em \cite{Ca, Kl}}
Any compact Lorentzian manifold of constant curvature is
geodesically complete. \label{tCaKl}
\end{theorem}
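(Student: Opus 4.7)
The plan is to exploit the strong rigidity that constant curvature imposes by viewing $(M,g)$ as a $(G,X)$-manifold. Concretely, a Lorentzian $n$-manifold of constant curvature $c$ is locally modelled on its simply connected model space form $X_c$: Minkowski $\R^{1,n-1}$ if $c=0$, de Sitter space if $c>0$, or the universal cover of anti--de Sitter space if $c<0$, with model isometry group $G_c$. Passing to the universal cover $\tilde M$ yields a developing map $\mathrm{dev}:\tilde M\to X_c$ and a holonomy representation $\rho:\pi_1(M)\to G_c$. Since $X_c$ is itself geodesically complete, standard $(G,X)$-theory ensures that $(M,g)$ is geodesically complete if and only if $\mathrm{dev}$ is a covering map; as $X_c$ is simply connected, this reduces to showing that $\mathrm{dev}$ is a diffeomorphism onto $X_c$.

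Since $\mathrm{dev}$ is automatically a local diffeomorphism, the core of the proof is surjectivity. The strategy, introduced by Carri\`ere in the flat case, is algebraic--dynamical. One works with the notion of a \emph{discompact} subgroup of $GL(n,\R)$: informally, a subgroup whose elements distort the standard norm only in a quantitatively controlled way, with a bounded ratio between their extreme singular values relative to an ambient scale. The crucial algebraic input is that the linear Lorentz groups $O(1,n-1)$ (and $O(1,n)$, $O(2,n-1)$ in the non--flat cases) are discompact. Combined with the compactness of $M$ --- which allows any escaping sequence in $\tilde M$ to be renormalized by deck transformations, with the renormalizing holonomy elements forced to lie in the relevant discompact subgroup --- one argues by contradiction that the boundary $\partial\,\mathrm{dev}(\tilde M)\subset X_c$ must be empty: a would--be limit point would produce a sequence of linear holonomies violating the discompacity bound.

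The main obstacle, I expect, is precisely this step: converting the algebraic discompacity property into the geometric statement that $\mathrm{dev}$ has no boundary. In the flat case, Carri\`ere exploits the affine structure of $\R^{1,n-1}$ to carry out the analysis linearly on sequences of holonomy matrices. Klinger's extension to $c\neq 0$ requires embedding $X_c$ as a quadric in $\R^{1,n}$ (de Sitter) or $\R^{2,n-1}$ (anti--de Sitter) and lifting the whole discussion to the linear action of the ambient isometry group on this larger space, so that discompacity of the enlarged linear group can play the same role; one must in addition deal with the projective subtleties of the quadric models, and in the AdS case descend from the universal cover at the end. Once surjectivity of $\mathrm{dev}$ is obtained, $(M,g)$ is isometric to a quotient $\Gamma\backslash X_c$ with $\Gamma$ acting freely and properly discontinuously, and geodesic completeness then follows because any geodesic of $M$ lifts to a geodesic of the complete model $X_c$, hence extends to all of $\R$ before being projected back to $M$.
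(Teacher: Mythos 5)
The paper offers no proof of this theorem: it is quoted from Carri\`ere \cite{Ca} (flat case) and Klingler \cite{Kl} (arbitrary constant curvature), so there is no internal argument to compare against. Your sketch does correctly identify the strategy of those papers: regard $(M,g)$ as a $(G,X)$-manifold modelled on Minkowski, de Sitter or (the universal cover of) anti--de Sitter space, reduce geodesic completeness to completeness of the $(G,X)$-structure, i.e.\ to the developing map being a covering onto the simply connected model, and then use compactness of $M$ together with quantitative control on the linear part of the holonomy to rule out boundary points of $\mathrm{dev}(\tilde M)$.

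As a proof, however, the proposal has a genuine gap, and you flag it yourself: the entire content of Carri\`ere's and Klingler's work is the step you call ``the main obstacle'' and do not carry out. Moreover, your informal description of \emph{discompacit\'e} is off in a way that matters. It is not that elements of $O(1,n-1)$ have ``a bounded ratio between their extreme singular values'' --- that ratio is unbounded, since a boost has singular values $e^{t},1,\dots,1,e^{-t}$. Carri\`ere's actual point is that the images of the unit ball under a divergent sequence in $O(1,n-1)$ degenerate onto sets of dimension at most one (discompacit\'e equal to $1$), and it is precisely this dimension count that makes the renormalization-by-deck-transformations argument close up. This is also where Lorentzian signature enters irreplaceably: for $O(\nu,n-\nu)$ with $\nu\geq 2$ the discompacit\'e is larger and the argument fails, which is exactly why Remark \ref{ropenqconstantcurvatarbsign} records the higher-signature case as open. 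A complete write-up would have to prove the discompacit\'e estimate and actually execute the boundary argument (plus the projective and covering subtleties of the non-flat models that you only name); as it stands, what you have is an accurate map of the literature rather than a proof.
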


\begin{remark}\label{ropenqconstantcurvatarbsign} Recall that the proof of this result holds only for
Lorentzian signature; as far as we know, the extension of the
result to higher signatures is an open problem.
\end{remark}

\subsection{Riemannian and Lorentzian interplay: plane waves}

\runinhead{Plane waves, pp-waves and further generalizations.}
Following  \cite{CFS}, consider a Lorentzian $n$-manifold, $n\geq 3$, that can be written
globally as $(M=\R^2\times M_0,g)$ where the natural coordinates
of $\R^2$ will be labelled $(u,v)$ and $g$ is written as:
$$
g_{(u,v,x)}=-2dudv + H(u,x)du^2 + \Pi_0^\star g_0, \quad \quad
\forall (u,v,x)\in \R^2\times M_0,
$$
being $\Pi_0:M\rightarrow M_0$ the natural projection and $g_0$ a
Riemannian metric on $M_0$. Here, we will refer to
these spaces as {\em $M_0$p-waves}. When $(M_0,g_0)$ is just
$\R^{n-2}$, these metrics are called {\em pp-waves} ({\em
plane-fronted waves with parallel rays}), namely, $M=\R^n$,
\begin{equation}
\label{epp}
g_{(u,v,x)}=-2dudv + H(u,x^1,\dots,x^{n-2})du^2 + \sum_{i=1}^{n-2}
(dx^i)^2 \quad \quad \forall (u,v,x^1,\dots,x^{n-2})\in \R^n
\end{equation}
Such a pp-wave is called a {\em plane wave} when $H$ is quadratic
in $(x^1,\dots , x^{n-2})$,
$$
H(u,x^1,\dots,x^{n-2})= \sum_{i,j=1}^{n-2}A_{ij}(u)x^ix^j.
$$
 In the particular case $n=4$ one writes
$H(u,x,y)=a(u) \, (x^2-y^2) + 2 \, b(u) \, xy + c(u) \, (x^2+y^2),
$ where $a,b,c$ are arbitrary smooth functions of $u$. The
functions $ a,b$ describe the wave profiles of the two linearly
independent polarization modes of gravitational radiation, while
$c$ describes the wave profile of  non-gravitational radiation.
When $c = 0$ (vacuum or gravitational plane waves) the Ricci
tensor vanishes.

 Plane waves
are interesting  in many physical issues. We remark here that they
are also interesting in  the framework of {\em $r$th-symmetric
spaces} $r\geq 2$ (introduced in \cite{Se}, see \cite{BSS}  for a
systematic study). These are pseudo-Riemannian manifolds with
$r$th-covariant derivative of its curvature tensor $R$ equal to 0:
$$\nabla^{r}R:=\nabla \dots ^{(r)}\nabla R\equiv 0.$$ For
Riemannian manifolds $r$th-symmetry implies local symmetry (i.e.,
$\nabla R=0$) but proper examples of rth-symmetric spaces  can be
found in the class of plane waves. In fact, such examples are
obtained just regarding the matrix $A$ as a polynomial in $u$ of
degree $r-1$:
$$A_{ij}(u)=a_{ij}^{(r-1)} u^{r-1}+\ldots+a_{ij}^{(1)}
u^{1}+a_{ij}^{(0)}$$ where $a_{ij}^{(r-1)}\not\equiv 0$; a simple
computation shows that  $\nabla^rR=0$ but $\nabla^{r-1}R\neq 0$.

As shown in \cite{BSS},  proper 2nd-symmetric Lorentzian spaces
are locally isometric to the product of such a wave (with $r=2$)
and a locally symmetric Riemannian space.

\runinhead{Completeness of $M_0$p-waves.} A nice relation between
the geodesic completeness of a class of Lorentzian manifold and
the completeness of Riemannian trajectories for a potential
appears in the case of $M_0$p-waves:

\begin{theorem} \label{twaves}A $M_0$p-wave is geodesically complete if and only if $(M_0,g_0)$ is complete
 and the trajectories of
$$\frac{D\dot\gamma}{dt}(t)\ =   - \nabla^{M_0} V (\gamma(t),t)$$
are complete for $V=-H/2$.

Thus, under the completeness of the Riemannian part $(M_0,g_0)$, a
$M_0p$-wave is complete if $H$ and $|\partial H/\partial u|$ grows
at most quadratically for finite  $u$-times. In particular, all
plane waves are geodesically complete. \end{theorem}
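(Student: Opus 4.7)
The strategy is to show that the geodesic system of an $M_0$p-wave splits into three essentially independent pieces: an affine equation for $u$, a second-order equation for $x\in M_0$ which, after affine rescaling of the parameter, coincides with the potential trajectory equation in the statement, and a first-order quadrature for $v$ arising from the constancy of the geodesic speed. Once this splitting is made, the ``if and only if'' reduces to a matching of the corresponding completeness notions.

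I begin by writing the geodesic equations for $\gamma(t)=(u(t),v(t),x(t))$ as Euler--Lagrange equations for $L=\tfrac12 g(\dot\gamma,\dot\gamma)=-\dot u\dot v+\tfrac12 H(u,x)\dot u^2+\tfrac12 g_0(\dot x,\dot x)$. Since $v$ is cyclic, $\partial L/\partial\dot v=-\dot u$ is conserved, so $\ddot u=0$ and $u(t)=u_0+\Delta_u t$ is defined on all of $\R$, independently of $H$. The $x$-component equation is
\begin{equation}\label{pe1}
\frac{D\dot x}{dt}\;=\;\tfrac12\,\dot u^{\,2}\,\nabla^{M_0}H(u,x).
\end{equation}
If $\Delta_u=0$ this reduces to $D\dot x/dt=0$, so $x$ is a geodesic of $(M_0,g_0)$; moreover, constancy of $g(\dot\gamma,\dot\gamma)$ forces $\dot v$ to be constant, so $\gamma$ is complete iff $x$ is, which happens precisely when $(M_0,g_0)$ is complete.

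If $\Delta_u\neq 0$, I rescale the affine parameter by $s=\Delta_u t$, so that $u=u_0+s$ and the coefficient $\dot u^2$ in the rescaled \eqref{pe1} becomes $1$. Setting $V(x,s):=-\tfrac12 H(s+u_0,x)$, equation \eqref{pe1} is transformed into
$$\frac{D\dot x}{ds}\;=\;-\nabla^{M_0}V(x(s),s),$$
which, up to the harmless time-shift by $u_0$, is the trajectory equation in the statement. By hypothesis its inextensible solutions are complete, so $x$ is globally defined on $\R$. The constancy of $c:=g(\dot\gamma,\dot\gamma)$ then yields
$$\dot v\;=\;\frac{H(u,x)\dot u^2+g_0(\dot x,\dot x)-c}{2\dot u},$$
whose right-hand side is continuous in $t$ once $x(t)$ is globally defined, so $v$ is recovered by a single quadrature on all of $\R$; that the $u$-Euler--Lagrange equation holds automatically follows by differentiating the speed conservation law. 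This establishes the implication from the completeness of $(M_0,g_0)$ plus the trajectories to that of the $M_0$p-wave. The converse is immediate by specializing to the two families $\Delta_u=0$ and $\Delta_u\neq 0$, which recover geodesics of $g_0$ and trajectories for $V$ respectively.

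For the supplementary assertions, with $E=0$ and $R=0$ the potential $V=-H/2$ satisfies hypothesis (iii) of Theorem~\ref{t1} precisely when $H$ and $|\partial H/\partial u|$ grow at most quadratically along finite $u$-intervals; the conclusion of Theorem~\ref{t1} then gives the completeness of the trajectories, and the splitting above yields the completeness of the $M_0$p-wave. Plane waves are the special case $H(u,x)=\sum_{i,j}A_{ij}(u)x^ix^j$ with smooth $A_{ij}$, for which both $H$ and $\partial_u H$ are pointwise quadratic in $|x|$ with coefficients continuous in $u$, so the hypothesis is automatic. The main conceptual obstacle in the whole argument is simply identifying the correct splitting and the correct parameter rescaling that converts the $\dot u^2$ factor in \eqref{pe1} into the standard form of the trajectory equation; once this is done, the analytic content is entirely contained in the Riemannian completeness results of Section~\ref{s2}.
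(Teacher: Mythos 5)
Your argument is correct in substance, but it takes a more self-contained route than the paper: the paper does not rederive the geodesic system at all, it simply invokes the careful Lorentzian--Riemannian equivalence established in \cite[Prop.~3.1, Th.~3.2]{CFS} for the first part, and then applies Theorem~\ref{t1} (with $E=R=0$) for the second. What you do instead is reprove that equivalence from scratch: the cyclic coordinate $v$ giving $\ddot u=0$, the rescaling $s=\Delta_u t$ turning the $x$-equation into $D\dot x/ds=-\nabla^{M_0}V$ with $V=-H/2$, and the recovery of $v$ by quadrature from the conserved speed, with the $u$-equation then following from the energy identity $\frac{d}{dt}g(\dot\gamma,\dot\gamma)=2\sum_i\dot q^i\,\mathrm{EL}_i$ wherever $\dot u\neq 0$. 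This buys a reader a complete proof without chasing the reference, at the cost of length; the analytic content (Theorem~\ref{t1}) is identical in both treatments. One small repair: in the case $\Delta_u=0$ the constancy of $\dot v$ does \emph{not} follow from the constancy of $g(\dot\gamma,\dot\gamma)$ (which then reads $g_0(\dot x,\dot x)=\mathrm{const}$ and does not involve $v$ at all, and the energy argument for the $u$-equation degenerates since $\dot u\,\mathrm{EL}_u\equiv 0$); you must instead use the $u$-Euler--Lagrange equation directly, which with $\dot u\equiv 0$ reduces to $\ddot v=0$. The conclusion of that case ($\gamma$ complete iff $x$ is, i.e.\ iff $(M_0,g_0)$ is geodesically, equivalently metrically, complete) is unaffected. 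Your handling of the supplementary assertions --- hypothesis~(iii) of Theorem~\ref{t1} for $V=-H/2$ and the automatic quadratic bounds for plane waves --- matches the paper's intent exactly.
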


\begin{proof} The first part is proven  in \cite[Th. 3.2]{CFS}, by
means of a careful equivalence between the Lorentzian geodesics
and Riemannian trajectories \cite[Prop. 3.1]{CFS}. So, it is
enough to apply Theorem \ref{t1}.
\end{proof}
As emphasized in \cite{FScqg}, this type of  result also justifies
that all physically reasonable pp-waves (that is, those with a
qualitative behavior of $H$ as a plane wave, eventually with some
possible decay at infinity) will be geodesically complete and, so,
they can be regarded as singularity free.

Finally, we state the following very recent result by Leistner and
Schliebner on pp-waves. Notice that, for any pp-wave as  above
(formula \eqref{epp}), the vector field $V=\partial_v$ is parallel
and lightlike, and the curvature tensor $R$ satisfies:
\begin{equation}
\label{elocalpp}
R(U,W) = 0 \qquad\qquad
\hbox{for all vector fields} \; U,W \; \hbox{orthogonal to} \; V .
\end{equation}
 Conversely, any spacetime admitting such a vector field can be written locally as a pp-wave. Now:

\begin{theorem}\label{tleistner} {\em \cite{LS}}
The universal covering of any compact Lorentzian manifold $(M,g)$ which admits a parallel lightlike vector field $V$ satisfying \eqref{elocalpp}, is a geodesically complete pp-wave.
In particular, $(M,g)$ is geodesically complete.
\end{theorem}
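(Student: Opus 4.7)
My strategy is to show that the universal cover $(\tilde M,\tilde g)$ has the global pp-wave form \eqref{epp} on $\R^n$ with a profile $H(u,x)$ such that $H$ and $|\partial_u H|$ grow at most quadratically in $x$ along finite $u$-times, and then to invoke Theorem~\ref{twaves}. Since geodesics of $M$ lift to geodesics of $\tilde M$ and conversely, geodesic completeness of $\tilde M$ transfers to $(M,g)$.

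\emph{Global pp-wave structure on $\tilde M$.} The parallel lightlike vector field $V$ and the curvature identity \eqref{elocalpp} lift to $(\tilde M,\tilde g)$, so by the local characterization recalled just before the statement, each point of $\tilde M$ admits pp-wave coordinates. Because $V$ is parallel, $V^\flat$ is a parallel and hence closed $1$-form; on the simply-connected manifold $\tilde M$ this produces a globally defined $u:\tilde M\to\R$ with $du=V^\flat$. Using the flow of $V$ (whose orbits are lightlike geodesics) to produce a global $v$, together with a parallel transverse distribution whose leaves are flat thanks to \eqref{elocalpp} to produce global transverse coordinates $x=(x^1,\dots,x^{n-2})$, I would identify $\tilde M\cong\R^n$ and write $\tilde g$ in the pp-wave normal form \eqref{epp} for a smooth function $H(u,x)$.

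\emph{Quadratic bound and conclusion.} The deck group $\Gamma=\pi_1(M)$ acts by isometries of $\tilde g$ preserving the parallel lightlike direction $\partial_v$; this forces each $\varphi\in\Gamma$ to take the form
\[
\varphi(u,v,x)=\bigl(u+c_\varphi,\;v+\ell_\varphi(u,x),\;A_\varphi x+b_\varphi(u)\bigr),
\]
with $A_\varphi$ Euclidean orthogonal. Compactness of $M=\tilde M/\Gamma$ yields an a priori $C^1$-bound on $H$ on any fundamental domain; comparing $H$ at $x$ and at $A_\varphi x+b_\varphi(u)$ through the invariance $\varphi^\ast\tilde g=\tilde g$, this bound propagates to all of $\R^{n-2}$ with a loss at most quadratic in $|x|$, giving
\[
|H(u,x)|+|\partial_u H(u,x)|\leq C_0(u)+C_2(u)|x|^2
\]
uniformly on each compact $u$-interval. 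This is exactly what Theorem~\ref{twaves} requires with $(M_0,g_0)=(\R^{n-2},\delta)$, so $\tilde M$, and hence $(M,g)$, is geodesically complete.

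\emph{Main obstacle.} The delicate step is converting compactness of $M$ into a genuinely quadratic (rather than some arbitrary polynomial) bound on $H$. The crucial input is the rigidity of the transverse action: each $A_\varphi$ must be a Euclidean isometry because $\varphi$ preserves the screen metric induced by $\tilde g$ on $V^\perp/\R V$, so orbits of $\Gamma$ in the $x$-factor cannot be too wild. A secondary difficulty, preparatory to the above, is showing that the local pp-wave charts paste together to produce the normal form \eqref{epp} globally on $\R^n$; this uses simple-connectedness of $\tilde M$, completeness of the lightlike geodesics tangent to $V$, and the flatness of the transverse leaves inherited from \eqref{elocalpp}, and it is here that I expect to spend most of the geometric effort.
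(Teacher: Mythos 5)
A preliminary remark: the paper does not prove this theorem at all --- it is quoted from Leistner--Schliebner \cite{LS} and followed only by commentary --- so your proposal can only be measured against what a complete argument actually requires. Your skeleton (global Brinkmann form on the universal cover, quadratic growth of $H$ and $\partial_uH$ from compactness, then Theorem \ref{twaves}) is the right one and is in the spirit of both \cite{LS} and of this paper's Theorem \ref{twaves}. But your two key steps each contain a genuine gap. First, the global normal form: you propose to build the transverse coordinates from ``a parallel transverse distribution whose leaves are flat''. Such a distribution does not exist in general. The holonomy of a pp-wave lies in the abelian part of the stabilizer of $V$, which preserves no complement of $\R V$ inside $V^{\perp}$ unless the metric is flat in the relevant directions; concretely, in coordinates \eqref{epp} one has that $\nabla_{\partial_u}\partial_{x^i}$ has a $\partial_v$-component proportional to $\partial_{x^i}H$, so the span of the $\partial_{x^i}$ is \emph{not} parallel. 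The correct objects are the integrable degenerate distribution $V^{\perp}$ and the flat quotient screen $V^{\perp}/\R V$, and gluing the local Brinkmann charts into a global identification $\tilde M\cong\R^n$ requires showing that the relevant leaves are complete and simply connected --- a completeness-type statement that must itself be extracted from the compactness of $M$ (beware the circularity), and which is where most of the work in \cite{LS} actually lies. Your sketch only gestures at this.

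Second, the quadratic bound. The isometry relation $\varphi^{*}\tilde g=\tilde g$ forces the correction $H\circ\varphi-H$ to be \emph{affine} in $x$ for each deck transformation $\varphi$ (since $\ell_\varphi$ must be affine in $x$), so a single application loses only a linear term; but to reach a point with $|x|=D$ from a fundamental domain you must compose on the order of $D$ generators, and controlling the accumulated error requires a word-length/quasi-isometry estimate for $\Gamma$ acting on the $x$-factor that you have not supplied. ``Propagates with a loss at most quadratic in $|x|$'' is the conclusion you want, not an argument. The clean route bypasses the deck group: by \eqref{elocalpp} the only surviving curvature component is $R(\partial_{x^i},\partial_u,\partial_u,\partial_{x^j})$, which equals $\partial_{x^i}\partial_{x^j}H$ up to sign and a factor $\tfrac12$, and the Euclidean norm of this spatial Hessian is invariant under every deck transformation (the extra $\partial_v$- and screen-components of $d\varphi(\partial_u)$ are annihilated by \eqref{elocalpp} and by $\nabla V=0$). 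Hence $\|{\rm Hess}_xH\|$ descends to the compact quotient and is globally bounded, and integrating twice along straight lines in $x$ gives $|H(u,x)|\leq C_0(u)+C\,|x|^2$; a parallel argument with $\nabla R$ (which needs some care, since $\partial_u$ is not bounded with respect to any deck-invariant Riemannian metric) handles $\partial_uH$. With these two steps repaired, the appeal to Theorem \ref{twaves} does close the proof.
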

 This result  goes in the same direction of those for the compact case with constant curvature or (conformal) homogeneity.
 Nevertheless,  the  special holonomy derived from the
 {\em global} existence of $V$ plays a fundamental role here.
 So, in principle, it is not enough to assume that the spacetime is just locally isometric to a plane wave
 (and, so, for example, Carri\'ere's theorem is not re-proved).
 In fact, the universal covering is taken such that $\partial_v$ is the lift of the globally defined vector field $V$.

\begin{remark} The existence of a  complete vector field $V$ fulfilling
 the hypotheses in Theorem \ref{tleistner}, can be also regarded as
 a generalization of the notion of pp-wave to non-trivial
 topology. As emphasized in \cite{LS}, such a generalization may
 pose some topological subtleties related to Ehlers-Kundt conjecture (see the third question below),
 loosely suggested in the original article \cite{EK}.
\end{remark}

\subsection{Some open questions}

Taking into account previous considerations, the following
questions become natural and are open, as far as we know:

\ben \item \begin{svgraybox} Assume that a compact Lorentzian
manifold is globally conformal to a manifold of constant
curvature. Must it be geodesically complete? \end{svgraybox}

Recall that this poses a possible extension of Theorem
\ref{tCaKl}, which may be expected after the conformal extension
in Theorem \ref{tRS} of Marsden's Theorem \ref{tMa}. It is also
worth pointing out that, for compact manifolds, {\em lightlike}
completeness is a conformal invariant (this is easy to check as
lightlike pregeodesics are conformally invariant, and their
reparameterizations  as geodesics depend on a bounded conformal
factor, see \cite[Section 2.3]{CS} for detailed computations). So,
if a counterexample to the question existed, it would be
incomplete in some causal sense and complete in the lightlike
case. In particular, such a counterexample would prove that Lafuente's Theorem
\ref{tLa} cannot be extended to the conformal case even for
compact manifolds. It is also worth pointing out that, if $\gamma$ is a
geodesic for a metric $g$, then it satisfies an equation type \eqref{e}
for any conformally related metric $\bar g$
(but, in this case, such an equation is posed on a Lorentzian manifold $(M,\bar g)$).

\item
\begin{svgraybox}Assume that a pseudo-Riemannian manifold is r-th symmetric. Must
the three types of causal completeness be equivalent?
\end{svgraybox}

Such a question becomes natural after Lafuente's Theorem
\ref{tLa}, especially in the case of Lorentzian 2nd-symmetric
spaces, because of their simple classification explained above.

\item  \begin{svgraybox} Must any complete gravitational (i.e.,
Ricci flat)  pp-wave be a plane wave?
\end{svgraybox}
This is a long-standing open problem posed by Ehlers and Kundt
\cite{EK}. Recall first that all plane waves are complete, even if
non-gravitational (Theorem \ref{twaves}). The fact that these
waves are gravitational, i.e., Ricci flat, yields a link with
complex variable, as this condition is equivalent to the
harmonicity  of $H(x,u)$ with respect to the variable $x$ (see
\cite{FS}) ---notice that the study of the completeness of holomorphic vector
fields, become a field of research in its own right which has been
handled with specific tools, see for example \cite{Nueva,
Brunella, Bustinduy}. Thus, there are both,  physical and
mathematical motivations for its study \cite{Bi, FS}.

 \een As a
last comment, we point out that the completeness of trajectories
in a Lorentzian manifold under external forces is an almost open
field with rich possibilities \cite{CRSburgos}; as we have
said in the comments to question 1, this includes the equation of
geodesics for a conformally related metric. So, even though
 the physical interpretations of
such forces are less apparent  in the Lorentzian case than in the
Riemannian one, this may be an interesting topic for future
research.

\end{document}